\documentclass[reqno,a4paper,12pt]{amsart}

\numberwithin{equation}{section}
\usepackage{amsmath, amsthm, amssymb, amscd, accents, amsfonts}
\usepackage{mathtools}
\usepackage{url}
\usepackage{mathrsfs,dsfont}
\usepackage{datetime}
\usepackage{hyperref}
\usepackage{xcolor}

\usepackage[sort,nocompress]{cite}
\mathtoolsset{showonlyrefs}

\newtheorem{definition}{Definition}[section]
\newtheorem{theorem}[definition]{Theorem}
\newtheorem*{theorem*}{Theorem}
\newtheorem{lemma}[definition]{Lemma}
\newtheorem{corollary}[definition]{Corollary}
\newtheorem{proposition}[definition]{Proposition}

\def\N{{\mathbb N}}

\def\R{{\mathbb R}}

\def\C{{\mathbb C}}
\def\D{{\mathbb D}}

\allowdisplaybreaks

\DeclareMathOperator{\tr}{tr}
\DeclareMathOperator{\sym}{Sym}
\DeclareMathOperator{\herm}{Herm}
\DeclareMathOperator{\lspan}{span}

\DeclareMathOperator{\conv}{conv}

\usepackage[no-math]{fontspec}
\usepackage[cachedir=minted-cache]{minted}

\newmintinline[lean]{lean4}{bgcolor=white}
\newminted[leancode]{lean4}{escapeinside=!!,
                            breaklines,
                            fontsize=\normalsize}
\begin{document}

\title[Grothendieck's theorem for Bessel sequences]{Grothendieck's theorem for Bessel sequences}

\author[Lukas Liehr]{Lukas Liehr}
\address{Department of Mathematics, Bar-Ilan University, Ramat-Gan 5290002, Israel}
\email{lukas.liehr@biu.ac.il}

\author[Mitchell A. Taylor]{Mitchell A. Taylor}
\address{ Department of Mathematics, ETH Z\"urich, R\"amistrasse 101, 8092 Z\"urich, Switzerland}
\email{mitchell.taylor@math.ethz.ch}

\author[Peiyang Yu]{Peiyang Yu}
\address{Department of Mathematics, ETH Z\"urich, R\"amistrasse 101, 8092 Z\"urich, Switzerland}
\email{peiyang.yu@sam.math.ethz.ch}

\date{\today}
\subjclass[2020]{46B15, 46B25, 42C15, 68V20}
\keywords{Grothendieck's theorem, Bessel sequences, orthonormal extensions, formalized mathematics}

\begin{abstract}
We establish a sharp version of Grothendieck's theorem for Bessel sequences. Precisely, given a Bessel sequence $\{ x_j \}_{j\in\mathbb{N}}$ with Bessel bound $1$ in a Hilbert space, we show that there exists functions $\{ f_j \}_{j\in\mathbb{N}}$ belonging to the unit ball of $L^\infty([0,1])$
such that for all $j,k \in \mathbb{N}$ one has
$$
\langle x_j,x_k\rangle
=
\int_0^1 f_j(x)\overline{f_k(x)}\,dx.
$$
As an application, we give an affirmative answer to an extension problem of Olevskii: if $E \subset [0,1]$ is a Lebesgue measurable set such that $[0,1]\setminus E$ has positive measure, then every Bessel sequence in $L^2(E)$ with Bessel bound $1$ extends to an orthonormal system in $L^2([0,1])$ that is bounded by the (optimal) constant $\lambda([0,1]\setminus E)^{-1/2}$ on $[0,1]\setminus E$. A formalization of our main result in Lean 4 accompanies the paper.
\end{abstract}

\maketitle

\section{Introduction and results}

\subsection{}
A classical probabilistic formulation of Grothendieck's theorem \cite{grothendieck1953resume} states that the inner product of a Hilbert space can be represented as correlations of uniformly bounded random variables. Precisely, given a complex Hilbert space $H$, there exist a probability space $(\Omega,\mathbb P)$, maps $\Phi,\Psi :H \to L^\infty(\Omega,\mathbb P)$ that satisfy $\| \Phi(x) \|_\infty \leq \| x \|$ and $\| \Psi(x) \|_\infty \leq \| x \|$ for all $x \in H$, and a universal constant $K$, such that for all $x,y \in H$ one has
\begin{equation}\label{eq:G}
    \langle x,y \rangle = K \int_\Omega \Phi(x) \overline{\Psi(y)} \, d \mathbb P.
\end{equation}
See \cite[Theorem 3.4]{pisier2012grothendieck}. The maps $\Phi$ and $\Psi$ are in general nonlinear. The optimal constant $K$ so that \eqref{eq:G} holds for any Hilbert space is known as Grothendieck's constant. Notice that in the present paper we consider complex Hilbert spaces and therefore $K=K_\C$ is Grothendieck's constant for complex Hilbert spaces (the corresponding constant for real Hilbert spaces is different and satisfies $K_\C < K_\R$).

\subsection{}
A natural question is if in Grothendieck's theorem it is possible to choose $\Phi = \Psi$, while keeping the constant $K$ universal. A necessary condition for the existence of such a $\Phi$ is that for every $N \in \N$ and all $x_1, \dots, x_N \in H$ with $\|x_j\| \leq 1$ there exists a probability space $(\Omega,\mathbb P)$ and $f_1, \dots, f_N \in L^\infty(\Omega,\mathbb P)$ with $\|f_j\|_{L^\infty(\Omega,\mathbb P)}\leq 1$ for all $j$ such that
\begin{equation}\label{eq:GG}
    \langle x_j,x_k\rangle = K(N) \int_\Omega f_j \overline{f_k} \, d\mathbb P, \quad 1\le j,k\le N.
\end{equation}
Here, $K(N)$ denotes the smallest constant for which this conclusion holds for every Hilbert space $H$ and every $x_1,\dots,x_N$ in its unit ball. In contrast to Grothendieck's theorem, achieving \eqref{eq:GG} so that $K(N)$ stays uniformly bounded in $N$ is not possible. Indeed, Kashin and Szarek showed that $K(N)\asymp \log N$ \cite{kashinSzarek2003knaster,kashinSzarek2003gram}; see also \cite[Remark~3.6]{pisier2012grothendieck} for a self-contained proof of this fact.

A corresponding off-diagonal version of this problem, in which \eqref{eq:GG} is only required to hold for $j \neq k$, was studied by Kashin and Szarek \cite{kashinSzarek2003knaster,kashinSzarek2003gram} and by Alon, Makarychev, Makarychev and Naor \cite{alonEtAl2006quadratic}. To formulate their results, let $K_{\neq}(N)$ denote the smallest constant such that for every real Hilbert space $H$ and every $x_1, \dots, x_N \in H$ of norm one there exists a probability space $(\Omega, \mathbb P)$ and $f_1, \dots, f_N \in L^\infty(\Omega,\mathbb P)$ with $\|f_j\|_{L^\infty(\Omega,\mathbb P)}\leq 1$ such that
$$
\langle x_j,x_k\rangle = K_{\neq}(N) \int_\Omega f_j f_k \, d\mathbb P, \quad j \neq k.
$$
Kashin and Szarek proved that $K_{\neq}(N) \gtrsim (\log N)^{1/2}$ \cite{kashinSzarek2003knaster,kashinSzarek2003gram}. The exact order of growth $K_{\neq}(N) \asymp \log N$ was subsequently established by Alon, Makarychev, Makarychev and Naor \cite{alonEtAl2006quadratic}. Hence, the optimal constant for the off-diagonal problem is of order $\log N$ and does not stay bounded either.

We notice that the upper bound $K_{\neq}(N) \lesssim \log N$ was also established in the linear programming literature by Nemirovski, Roos and Terlaky \cite{nemirovskiRoosTerlaky1999} and Megretski \cite{megretski2001}. Charikar and Wirth later developed an algorithmic formulation of the quadratic Grothendieck inequality \cite{charikarWirth2004}. 

\subsection{}\label{Olevskii}
A problem attributed to Olevskii asks if a representation of the inner product with a single map $\Phi$ is possible with $K(N)$ uniformly bounded in $N$ if the vectors $x_1 , \dots, x_N$ have the additional structure of being a Bessel sequence with Bessel bound $1$, i.e.,
$$
\left\|
\sum_{j=1}^N c_jx_j
\right\|^2
\leq
\sum_{j=1}^N|c_j|^2, \quad c_1, \dots, c_N \in \C.
$$
A countable sequence $\{ x_j \}_{j \in \N}$ is called a Bessel sequence with Bessel bound $1$ if the previous inequality holds for every $N\in\N$.
Olevskii's problem is, for instance, stated as an open problem in Pisier's survey on Grothendieck's theorem \cite[Remark 3.7, p.~257]{pisier2012grothendieck}, where it is described as a well-known open question in the theory of bounded orthogonal systems.
The Bessel assumption is well-motivated by a specific extension problem raised in \cite[p.~58]{Olevskii} which we will discuss below and which constitutes the original motivation and formulation of Olevskii's problem.

The main result of the present paper provides an affirmative and sharp answer to Olevskii's problem.

\begin{theorem}\label{thm:main1}
    Let $H$ be a Hilbert space and let $\{ x_j \}_{j\in \N}\subset H$ be a Bessel sequence with Bessel bound $1$. Then there exist $\{ f_j \}_{j \in \N} \subseteq L^\infty([0,1])$ with $\| f_j \|_{L^\infty([0,1])} \leq 1$ for all $j$ and a constant $0 <K< \infty$ such that
    \begin{equation}
        \langle x_j,x_k \rangle = K \int_0^1 f_j(x) \overline{f_k(x)} \, dx, \quad j,k \in \N.
    \end{equation}
    Moreover, one can choose $K=1$ and this constant is optimal.
\end{theorem}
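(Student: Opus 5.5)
The lower bound and the reduction come first. Optimality of $K=1$ is immediate. If $\{x_j\}$ is orthonormal, which is a Bessel sequence with bound $1$, then $1=\|x_j\|^2=K\int_0^1|f_j|^2\,dx\le K$, so $K\ge 1$. Moreover, once we have a representation with some $K\le 1$, we can rescale, so it suffices to produce $f_j$ with $\|f_j\|_\infty\le 1$ and $\int_0^1 f_j\overline{f_k}\,dx=\langle x_j,x_k\rangle$ exactly.

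I would reformulate the hypothesis in terms of the Gram matrix $G=(\langle x_j,x_k\rangle)_{j,k}$. The Bessel condition says precisely $0\le G\le I$ as an operator on $\ell^2(\N)$. Hence $I-G\ge 0$, so there are vectors $y_j$ with $\langle y_j,y_k\rangle=\delta_{jk}-G_{jk}$. The vectors $u_j=x_j\oplus y_j$ are then orthonormal, and $x_j=Pu_j$ for an orthogonal projection $P$ (Naimark dilation).

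The guiding idea is to realize this dilation inside $L^2$ of a probability space in the following form:
\begin{itemize}
\item the $u_j$ become \emph{unimodular} orthonormal functions $\varphi_j$;
\item the projection $P$, or more generally the contraction $G^{1/2}$, becomes a positivity-preserving operator $T$ that is contractive on $L^\infty$, for instance a conditional expectation or a Markov operator.
\end{itemize}
Then $f_j:=T\varphi_j$ automatically satisfies $\|f_j\|_\infty\le\|\varphi_j\|_\infty=1$, and $\langle f_j,f_k\rangle=\langle T\varphi_j,T\varphi_k\rangle=G_{jk}$. Finally, any standard probability space can be transported to $[0,1]$ with Lebesgue measure by a measure-preserving isomorphism, which preserves both inner products and $L^\infty$ norms.

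To build $T$ and $\varphi_j$, I would first handle finite $N$ and then extend. For the finite case, I would try a second-quantization construction. Write $G^{1/2}=A$, a contraction on $\C^N$. On a product of circles (or a Gaussian space) take a family of unimodular orthonormal "characters" $\varphi_j$ indexed by an orthonormal basis. Let $T=\Gamma(A)$ be the associated Markov operator, for example a Mehler-type kernel given by averaging over a random contraction, or a conditional expectation obtained by coupling two copies of the system with correlation $A$. One then checks on these special vectors that $\langle T\varphi_j,T\varphi_k\rangle=\langle A e_j,Ae_k\rangle=G_{jk}$.

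The infinite case I would handle either by running the same construction directly on $\ell^2(\N)$, on an infinite product space, which is standard Borel, or by a consistent inductive choice as $N$ grows.

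Limiting arguments are dangerous here, since weak-$*$ limits do not preserve the Gram identity. That is why I would insist on a construction that is functorial in $A$ and therefore passes directly to infinitely many vectors.

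The main obstacle is the middle step: finding unimodular orthonormal $\varphi_j$ together with an $L^\infty$-contractive positive operator $T$ whose action reproduces an \emph{arbitrary} contraction $A$ on their span, exactly rather than up to a constant. The naive Gaussian or Wick-exponential choices are not bounded, and characters of independent variables are not mapped linearly by $\Gamma(A)$ unless $A$ is diagonal or permutation-like.

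My first attempt would be the conditional-expectation version, realizing the Naimark dilation via an explicit coupling. The facts to exploit are that unitary dilations of contractions exist and that conditional expectations onto sub-$\sigma$-algebras are exactly the $L^\infty$-contractive orthogonal projections. If that fails, I would fall back to an explicit inductive construction on dyadic intervals of $[0,1]$. In it, the $f_j$ would be built step by step with values in the closed unit disc, correcting the inner products at each stage using the slack $I-G\ge 0$.
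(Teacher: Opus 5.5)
Your reduction to the Gram matrix and your optimality argument are fine. However, the proposal does not prove the actual content of the theorem, namely the finite-dimensional case. You flag the construction of unimodular orthonormal $\varphi_j$ together with an $L^\infty$-contractive positive $T$ reproducing an arbitrary contraction as ``the main obstacle''. Every candidate you list (Mehler-type second quantization, characters on a torus, a coupling realizing the Naimark dilation, a dyadic correction scheme) either fails by your own account or comes without a mechanism. This is not a technicality. By Kashin--Szarek, without the Bessel condition the constant must grow like $\log N$ for vectors in the unit ball. So a proof has to turn $G\le I$ into a sharp quantitative inequality, and nothing in your outline does this.

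The missing idea is convex-geometric. With $K=1$, the finite statement is equivalent to $\{G\in\herm_N:0\le G\le I_N\}\subseteq\conv\{\zeta\zeta^*:\zeta\in\mathbb{D}^N\}$. Indeed, a convex combination $\sum_r p_r\zeta^{(r)}(\zeta^{(r)})^*$ is exactly the Gram matrix of step functions taking the value $\zeta^{(r)}_j$ on an interval of length $p_r$. In the real case, comparing support functions reduces this to showing $\max_{x\in[-1,1]^N}x^TAx\ge\operatorname{tr}(A_+)$ for every real symmetric $A$. This suffices because $\operatorname{tr}(A_+)=\max\{\operatorname{tr}(AM):0\le M\le I_N\}$, attained at the spectral projection onto the positive eigenspace $E(A)$. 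That inequality is the Ball--Prodromou theorem applied on $E(A)$ to $Q=A|_{E(A)}$ and the Parseval frame $u_j=P_{E(A)}e_j$. It gives $w\in E(A)$ with $\langle Aw,w\rangle\ge\operatorname{tr}(A_+)$ and $|w_j|=|\langle w,u_j\rangle|\le1$. The complex case follows by applying this to the real matrix $\begin{pmatrix}R&-S\\S&R\end{pmatrix}$, where $G=R+iS$, and setting $\zeta=(a+ib)/\sqrt2$ for each resulting $(a,b)\in[-1,1]^{2N}$.

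Your dismissal of limiting arguments is also misplaced, and it led you to demand an unnecessary ``functorial'' construction. Weak-$*$ limits of the functions $f_j^{(N)}$ indeed need not preserve $\int f_j\overline{f_k}$, but limits of their joint laws do. The Borel probability measures on the compact metrizable space $\mathbb{D}^{\N}$ form a weak-$*$ compact set. Each constraint $\int z_j\overline{z_k}\,d\nu=G_{jk}$ is closed because $z\mapsto z_j\overline{z_k}$ is continuous. The finite case gives the finite intersection property, so some $\nu$ satisfies all constraints at once. Realizing $\nu$ as the law of a Borel map $T:[0,1]\to\mathbb{D}^{\N}$ then yields $f_j=\pi_j\circ T$. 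This is exactly how the paper passes from finitely many to countably many vectors.
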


We remark that the optimality of $K=1$ follows directly from the observation that if $x_{j}$ is a unit vector then $1 = \langle x_{j},x_{j} \rangle = K \int_0^1 |f_{j}|^2 \, dx \leq K$.

\subsection{}
For the corresponding off-diagonal problem, Kashin proved in \cite{kashin2022observation} that for every finite Bessel sequence $x_1, \dots, x_N$ with Bessel bound $1$ in a real Hilbert space there exist functions $f_j\in L^\infty([0,1])$ with $\|f_j\|_{L^\infty([0,1])}\leq \sqrt 2$ such that
$$
\langle x_j,x_k\rangle
=
\int_0^1 f_j(t)f_k(t)\,dt,
\qquad j\neq k;
$$
in the normalization of \eqref{eq:GG}, this corresponds to the constant $K = 2$. Kashin notes explicitly that his theorem does not preserve the diagonal entries $j=k$. Theorem \ref{thm:main1} closes this gap: it preserves the diagonal entries, gives the sharp bound $K=1$, and holds for countably infinite sequences in arbitrary Hilbert spaces.

\subsection{}\label{Olevskii extension}
Notice that a system $\{ x_j \}_{j\in \N}$ is a Bessel sequence with Bessel bound $1$ if and only if the Gram matrix
\begin{equation}\label{eq:gram}
    G=(g_{jk})_{j,k\in\N}, \quad g_{jk}=\langle x_j,x_k\rangle
\end{equation}
satisfies $0 \leq G \leq I$ in the sense of positive semidefinite linear operators on $\ell^2(\N)$. Moreover, every infinite Hermitian matrix with $0 \leq G \leq I$ is represented in the form \eqref{eq:gram} for some $\{ x_j \}_{j\in \N}$.
Hence, Theorem \ref{thm:main1} implies that the entries $g_{jk}$ of every Hermitian matrix with $0 \leq G \leq I$ can be represented as inner products of the form
$$
g_{jk} = \int_0^1 f_j(x) \overline{f_k(x)} \, dx,
$$
where $\{ f_j \}_{j \in \N}$ belong to the unit ball of $L^\infty([0,1])$.
With this Gram matrix formulation in hand, one can establish the relation between Theorem \ref{thm:main1} and the original formulation of Olevskii's problem in terms of an extension problem. To see this relation, let $E \subseteq [0,1]$ be measurable and assume that
$$
\nu := \lambda([0,1]\setminus E)>0,
$$
where $\lambda$ denotes the Lebesgue measure. Given a Bessel sequence $\{ u_j \}_{j\in \N} \subseteq L^2(E)$ with Bessel bound $1$, we ask whether there exists an orthonormal system $\{ \phi_j \}_{j\in \N} \subseteq L^2([0,1])$ so that $\phi_j = u_j$ on $E$ and $\phi_j$ is bounded on $A : = [0,1] \setminus E$ by a constant independent of $j$. Hence, we are asking for a pointwise controlled extension to an orthonormal system. To see the connection with Theorem \ref{thm:main1}, define
$$
G_u=
\bigl(\langle u_j,u_k\rangle_{L^2(E)}\bigr)_{j,k\in \N}.
$$
The Bessel condition gives
$
0\le G_u\le I
$
and therefore the matrix
$
D=I-G_u
$
is again positive semidefinite with $0 \leq D \leq I$. We now apply
Theorem \ref{thm:main1} to a Bessel sequence with Gram matrix $D$. After rescaling, we obtain functions
$v_j\in L^\infty(A)$ satisfying
$$
\langle v_j,v_k\rangle_{L^2(A)}
=
\delta_{jk}
-
\langle u_j,u_k\rangle_{L^2(E)}, \quad \|v_j\|_{L^\infty(A)}
\leq \frac{1}{\sqrt{\nu}},
$$
where $\delta_{jk}$ denotes the Dirac delta. Consequently, the system $\{ \phi_j\}_{j \in \N}$ defined by
$$
\phi_j(x)
=
\begin{cases}
u_j(x), & x \in E,\\
v_j(x), & x \in A,
\end{cases}
$$
forms an orthonormal system with the desired property. We therefore obtain the following sharp solution to Olevskii's extension problem.

\begin{theorem}\label{thm:main2}
Let $E\subseteq[0,1]$ be measurable such that $[0,1]\setminus E$ has positive Lebesgue measure.
Further, let $\{ u_j \}_{j\in \N} \subseteq L^2(E)$
be a Bessel sequence with Bessel bound $1$. Then there exists an orthonormal system $\{ \phi_j \}_{j\in \N}\subseteq L^2([0,1])$ satisfying
$$
\phi_j|_E=u_j,
\qquad
\|\phi_j\|_{L^\infty([0,1]\setminus E)}
\le
\frac{1}{\sqrt{\lambda([0,1]\setminus E)}},
\qquad
j\in \N.
$$
Moreover, the constant $\frac{1}{\sqrt{\lambda([0,1]\setminus E)}}$ is optimal for every such set $E$.
\end{theorem}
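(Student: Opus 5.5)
We deduce Theorem~\ref{thm:main2} from Theorem~\ref{thm:main1}, following the reduction sketched in \S\ref{Olevskii extension}, and then prove optimality separately. Set $A=[0,1]\setminus E$ and $\nu=\lambda(A)>0$. The Bessel assumption on $\{u_j\}$ says exactly that the Gram matrix $G_u=(\langle u_j,u_k\rangle_{L^2(E)})$ satisfies $0\le G_u\le I$ on $\ell^2(\N)$. Hence $D=I-G_u$ also satisfies $0\le D\le I$.

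\textbf{Step 1: realise $D$ by a Bessel sequence.} Take $D^{1/2}$, the positive square root, and put $y_j=D^{1/2}e_j\in\ell^2(\N)$. Then $\langle y_j,y_k\rangle=\langle De_j,e_k\rangle$, which is the $(k,j)$ entry of $D$. If the inner-product convention requires it, conjugate, i.e.\ use $D^{T}=\overline{D}$, which again satisfies $0\le \overline D\le I$; this gives exactly $d_{jk}$. For finitely supported $c$,
$$\Bigl\|\sum c_jy_j\Bigr\|^2=\langle Dc,c\rangle\le\|c\|^2,$$
so $\{y_j\}$ is Bessel with bound $1$.

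\textbf{Step 2: apply Theorem~\ref{thm:main1} and transport to $A$.} Theorem~\ref{thm:main1} gives $f_j\in L^\infty([0,1])$ with $\|f_j\|_\infty\le1$ and $\langle y_j,y_k\rangle=\int_0^1 f_j\overline{f_k}$. Since $A$ has positive measure, Lebesgue measure on $A$ normalised to a probability measure is a standard nonatomic probability space. There is therefore a measure-preserving map $\tau:A\to[0,1]$ with $\lambda(\tau^{-1}(B))=\nu\lambda(B)$, for instance $\tau(x)=\nu^{-1}\lambda(A\cap[0,x])$, which pushes $\lambda|_A/\nu$ forward to Lebesgue measure on $[0,1]$. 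Define $v_j=\nu^{-1/2}f_j\circ\tau$ on $A$. A change of variables gives
$$\int_A v_j\overline{v_k}=\int_0^1 f_j\overline{f_k}=\delta_{jk}-\langle u_j,u_k\rangle_{L^2(E)},$$
and $\|v_j\|_{L^\infty(A)}\le\nu^{-1/2}$. Now set $\phi_j=u_j$ on $E$ and $\phi_j=v_j$ on $A$. Then
$$\langle\phi_j,\phi_k\rangle=\langle u_j,u_k\rangle_{L^2(E)}+\langle v_j,v_k\rangle_{L^2(A)}=\delta_{jk},$$
so $\{\phi_j\}$ is an orthonormal system with the required bound on $A$. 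The only delicate points are the conjugation convention in Step 1 and the measurability of $\tau$; both are routine.

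\textbf{Step 3: optimality.} For each $E$ we need a Bessel sequence for which no constant $C<\nu^{-1/2}$ works. Take $u_1=0$, which is trivially Bessel with bound $1$, and all other $u_j=0$ as well. Any orthonormal extension must satisfy
$$1=\|\phi_1\|^2=\int_A|\phi_1|^2\le C^2\nu,$$
so $C\ge\nu^{-1/2}$. Together with Step 2 this shows the constant is attained and sharp.

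The main difficulty is entirely contained in Theorem~\ref{thm:main1}; given that result, the only point needing care here is the measure-space transfer in Step 2.
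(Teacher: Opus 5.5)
Your proposal is correct and matches the paper's proof essentially step by step: your map $\tau$ is exactly the paper's $\sigma_A$ from Lemma~\ref{lem:rescaling}, and your optimality example $u_1=0$ is the paper's as well. The one small difference is that you realise $I-G_u$ by a Bessel sequence via the square root $\overline{D}^{1/2}$ in order to invoke Theorem~\ref{thm:main1}. The paper instead applies the Gram-matrix form (Theorem~\ref{thm:countable_gram_realization}) directly, so it needs neither that step nor the conjugation bookkeeping.
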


Without the pointwise $L^\infty$ constraint, the Bessel condition is precisely the classical Schur criterion for the existence of an orthonormal extension: a system in $L^2(E)$ extends to an orthonormal system in $L^2([0,1])$ if and only if it is a Bessel sequence with Bessel bound $1$. See \cite[p.~70]{KS35} and \cite[p.~934]{kashinSzarek2003knaster} for details.
The study of uniformly bounded orthogonal systems goes back to Menchoff \cite{menchoff1938}, whose work already contains constructions of $\pm 1$-valued systems with prescribed off-diagonal Gram entries, while Olevskii studied the related problem of extending systems of functions to complete orthonormal systems \cite{olevskii1969extension}. In this context, we point out that in \cite[p.~58]{Olevskii} it is asked under what conditions a system can be extended to a complete uniformly bounded orthonormal system. The completeness aspect is not addressed in the present paper.

\subsection*{Usage of Large Language Models}

Theorem \ref{thm:main1:finite} (for the case of real Hilbert spaces) was developed with the assistance of GPT-5.4. Specifically, the result of Ball and Prodromou \cite{ballProdromou2009}, which was known to the authors, was provided to GPT-5.4. GPT-5.4 was then guided toward a construction of the functions $f_j$ given in \eqref{fct}. The resulting functions provided a version of the main result for the case of an $N$-dimensional real Hilbert space. This result was subsequently generalized by the authors to the case of countably many vectors in an infinite-dimensional complex Hilbert space. The Lean verification was carried out with the assistance of GPT-5.5 via Codex.

\section{Proof of Theorem \ref{thm:main1} and Theorem \ref{thm:main2}}\label{sec:proofs}
Let us first introduce some notation. Given  a $d$-dimensional real inner product space $(X, \langle\cdot, \cdot\rangle)$ and a linear operator $T:X\to X$, we denote the trace of $T$ as  
\begin{equation*}
   \tr(T)\coloneqq\sum_{j=1}^d \langle Te_j,e_j\rangle 
\end{equation*}
where $(e_i)_{1\leq i\leq d}$ is an arbitrary orthonormal basis of $X$. Let the space of all symmetric real $N\times N$ matrices be denoted as $\sym_N$ and be equipped with the Hilbert-Schimidt inner product
\begin{equation*}
    \langle A, B\rangle \coloneqq \tr(AB).
\end{equation*}
For any $A\in \sym_N$, the spectral theorem guarantees the existence of an orthonormal basis of eigenvectors $v_1,\ldots,v_N$ of $A$ with corresponding eigenvalues $\lambda_1,\ldots,\lambda_N$. We define the positive spectral subspace of $A$ by
\begin{equation*}
    E(A)\coloneqq\lspan\{v_r:\lambda_r>0\}
\end{equation*}
and denote by $P_{E(A)}:\mathbb R^N\to E(A)$ the orthogonal projection onto $E(A)$. Let
\begin{equation*}
    A_+\coloneqq\sum_{\lambda_r>0}\lambda_r v_rv_r^T,
    \qquad
    A_-\coloneqq-\sum_{\lambda_r<0}\lambda_r v_rv_r^T.    
\end{equation*}
Then,
\begin{equation*}
    A=A_+-A_-,
    \qquad
    A_+\ge 0,
    \qquad
    A_-\ge 0,
    \qquad
    A_-P_E=0,
    \qquad
    A_+P_E=A_+.
\end{equation*}
For complex Hilbert spaces, we use the convention that the inner products are linear in the first variable and conjugate-linear in the second variable.

A key result that we will use is the following theorem, which was proved in~\cite[Thm.~1.4]{ballProdromou2009} to obtain a sharp combinatorial version of Vaaler’s theorem.
\begin{theorem}[Ball--Prodromou]\label{thm:ball_prodromou}
    Let $E$ be a finite-dimensional real inner product space with $\dim E=d$. Let $u_1,\dots,u_N\in E$ satisfy
    \begin{equation*}
       \sum_{j=1}^N u_j\otimes u_j=I_E 
    \end{equation*}
    and $Q:E\to E$ be self-adjoint and positive semi-definite. Then there exists $w\in E$ such that
    \begin{equation*}
    \langle Qw,w\rangle\ge \tr(Q),
    \qquad
    |\langle w,u_j\rangle|\le 1,\quad j=1,\dots,N.
    \end{equation*}
\end{theorem}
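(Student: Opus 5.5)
We would prove the Ball--Prodromou theorem by maximizing a convex function over a polytope and then extracting the bound $\tr(Q)$ from the frame identity $\sum_j u_j\otimes u_j=I_E$. This is only a sketch of our own route; the paper takes the result from \cite[Thm.~1.4]{ballProdromou2009}.

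\emph{Setting up the extremal problem.} Let
\begin{equation*}
K=\{w\in E:\ |\langle w,u_j\rangle|\le 1,\ j=1,\dots,N\}.
\end{equation*}
The frame identity gives $\|w\|^2=\sum_j|\langle w,u_j\rangle|^2\le N$ on $K$, so $K$ is a compact, symmetric, full-dimensional polytope. The function $\phi(w)=\langle Qw,w\rangle$ is convex, so its maximum over $K$ is attained at some point $w_0$. It then suffices to show $\phi(w_0)\ge \tr(Q)$.

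\emph{Dualization.} Write $\phi(w)=\|Q^{1/2}w\|^2$. Then
\begin{equation*}
\max_K\phi=\Bigl(\sup_{\|z\|\le 1} h_K(Q^{1/2}z)\Bigr)^2,
\end{equation*}
where $h_K$ is the support function of $K$. Since $K$ is the polar of $L=\conv\{\pm u_j\}$, we have $h_K=\|\cdot\|_L$, and
\begin{equation*}
\|y\|_L=\min\Bigl\{\sum_j|a_j|:\ y=\sum_j a_ju_j\Bigr\}.
\end{equation*}
Arguing by contradiction, suppose $\max_K\phi<\tr(Q)$ and put $s^2=\max_K\phi$. Then $Q^{1/2}$ maps the unit ball into $sL$. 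Fix an orthonormal eigenbasis $e_1,\dots,e_d$ of $Q$ with eigenvalues $\mu_i$, and write $Q^{1/2}e_i=\sum_j a_{ij}u_j$ with $\sum_j|a_{ij}|\le s$. Then
\begin{equation*}
\tr(Q)=\sum_i\langle Q^{1/2}e_i,Q^{1/2}e_i\rangle=\sum_{i,j}a_{ij}\,\langle u_j,Q^{1/2}e_i\rangle .
\end{equation*}
The plan is to bound the right-hand side by $s\cdot\sqrt{\tr(Q)}$. Since $s<\sqrt{\tr(Q)}$ under our assumption, this yields the contradiction.

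\emph{Main obstacle.} The pairing above is $\ell^1$ in $a$ against the coefficients $\langle u_j,Q^{1/2}e_i\rangle$. These coefficients are only controlled in $\ell^2$, namely $\sum_j|\langle u_j,Q^{1/2}e_i\rangle|^2=\mu_i$, and not in $\ell^\infty$. So the naive estimate loses, and this is where the real work lies. We would first replace the fixed basis by a rotation-averaged (Gaussian) choice of $z$, exploiting that $\mathbb{E}\,\|Q^{1/2}z\|^2=\tr(Q)/d$ after normalization. Alternatively, we would go back to the primal side and take $w_0$ to be a vertex of $K$. There $d$ linearly independent constraints $\langle w_0,u_j\rangle=\varepsilon_j\in\{\pm1\}$, $j\in S$, are active, and the KKT conditions give $Qw_0=\sum_{j\in S}c_j\varepsilon_ju_j$ with $c_j\ge0$, hence $\phi(w_0)=\sum_{j\in S}c_j$. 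Maximality of $w_0$ against the adjacent vertices of $K$, obtained by moving along the edges through $w_0$, then gives inequalities on the $c_j$. Summing these with weights coming from $\sum_j u_j\otimes u_j=I_E$ and $\tr(Q)=\sum_j\langle Qu_j,u_j\rangle$ should give $\sum_{j\in S}c_j\ge\tr(Q)$. This summation step is the one I expect to be delicate, and it is essentially the content of \cite{ballProdromou2009}.
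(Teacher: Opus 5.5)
\emph{Genuine gap: the proposal does not prove the theorem.} The paper gives no proof of this statement either; it quotes it from \cite[Thm.~1.4]{ballProdromou2009}. Your preliminary steps are correct: $\|w\|^2=\sum_j\langle w,u_j\rangle^2$, $h_K=\|\cdot\|_L$, and $\phi(w_0)=\sum_{j\in S}c_j$ from KKT. But the decisive inequality $\max_K\phi\ge\tr(Q)$ is only reformulated, as $s\ge\sqrt{\tr(Q)}$ on the dual side and as $\sum_{j\in S}c_j\ge\tr(Q)$ on the primal side. Neither route you suggest can close it.

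\emph{Averaging cannot give the sharp constant.} The inequality is attained. For $N=d$, $u_j$ orthonormal and $Q=I_E$, you have $K=[-1,1]^d$ in these coordinates and $s^2=\max_K\phi=d=\tr(Q)$, that is, $s=\sup_{\|z\|\le1}\|z\|_1=\sqrt d$. Take a rotation-invariant (e.g.\ Gaussian) $z$ with $\mathbb{E}\|z\|^2\le1$. Then the averaged lower bound is $\mathbb{E}\,h_K(z)=\mathbb{E}\|z\|_1<\sqrt d$ for $d\ge2$; it equals $\sqrt{2d/\pi}$ for $z\sim N(0,I_d/d)$. So this route yields at best $\max_K\phi\ge c\,\tr(Q)$ with $c<1$, which would only give $c\,\mathcal G\subseteq\mathcal C$ in Proposition~\ref{prop:real_convex_hull}. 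The fixed-basis pairing fails even more basically. In the same example with the standard basis, $\|Q^{1/2}e_i\|_L=1$ for every $i$, so the row constraints $\sum_j|a_{ij}|\le s$ already hold with $s=1$. The information you use is insufficient, not merely badly estimated.

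\emph{Local information at a vertex is not enough.} The inequalities you plan to sum (KKT at $w_0$, and $\phi(w_0)\ge\phi(w')$ for adjacent vertices $w'$) can all hold at a vertex that is not the maximizer, where the conclusion is false. Take $d=2$ and
$u_{1,2}=(\pm\tfrac{3}{10},\tfrac12)$, $u_{3,4}=(\tfrac35,\pm\tfrac{9}{20})$, $u_5=\tfrac{1}{\sqrt{10}}\,e_1$, $u_6=\sqrt{\tfrac{19}{200}}\,e_2$.
\begin{itemize}
\item These satisfy $\sum_ju_j\otimes u_j=I$, and the constraints from $u_5,u_6$ are redundant.
\item $K$ is the octagon with vertices $\pm(0,2)$, $(\pm\tfrac{10}{33},\pm\tfrac{20}{11})$, $\pm(\tfrac53,0)$.
\item For $Q=\operatorname{diag}(1,\tfrac15)$ one has $\tr(Q)=\tfrac65$.
\item At $w_0=(0,2)$, $Qw_0=\tfrac25(u_1+u_2)$, so $c_1=c_2=\tfrac25\ge0$ and $\sum_{j\in S}c_j=\phi(w_0)=\tfrac45<\tfrac65$.
\item Both neighbours of $w_0$ have $\phi=\tfrac{820}{1089}<\tfrac45$, while the maximum $\tfrac{25}{9}$ sits at $\pm(\tfrac53,0)$.
\end{itemize}
Hence no weighting of these local inequalities by the frame identity can give $\sum_{j\in S}c_j\ge\tr(Q)$; a global ingredient is required. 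One way to see what is needed: by minimax over $\{Q\ge0,\ \tr(Q)=1\}$, the theorem is equivalent to the existence of a probability measure $\mu$ on $K$ with $\int w\otimes w\,d\mu\ge I_E$. In the example, mass $\tfrac12$ on each of the pairs $\pm(\tfrac53,0)$ and $\pm(0,2)$ gives $\operatorname{diag}(\tfrac{25}{18},2)$. Constructing such a measure for an arbitrary frame is the actual content of Ball--Prodromou, and your sketch leaves it open.
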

We will also need the characterization of convex sets by support functions~\cite[Cor.~13.1.1]{rockfellar1970}.
\begin{lemma}[Support functions]\label{lem:support_functions}
Let $V$ be a finite-dimensional real inner product space and let $K,L\subseteq V$ be nonempty compact convex sets. For $a\in V$, define
\begin{equation*}
    h_K(a)=\max_{x\in K}\langle a,x\rangle,
    \qquad
    h_L(a)=\max_{y\in L}\langle a,y\rangle. 
\end{equation*}
If $h_K(a)\le h_L(a)$ for every $a\in V$, then $K\subseteq L$.
\end{lemma}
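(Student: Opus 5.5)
We argue by contraposition, using the separating hyperplane theorem in its simplest form, namely the nearest-point projection onto a compact convex set. Suppose $K\not\subseteq L$ and pick $x_0\in K\setminus L$. We will produce a direction $a\in V$ with $h_K(a)>h_L(a)$, which contradicts the hypothesis. Since $K$ and $L$ are nonempty and compact, the maxima defining $h_K$ and $h_L$ are attained. Hence both functions are finite everywhere, and the inequality $h_K(a)\le h_L(a)$ is meaningful for every $a\in V$.

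\emph{Step 1: nearest point.} The map $y\mapsto \|x_0-y\|$ is continuous on the nonempty compact set $L$, so it attains its minimum at some $p\in L$. Since $x_0\notin L$, we have $x_0\neq p$, and we set $a:=x_0-p\neq 0$.

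\emph{Step 2: variational inequality.} We claim that $\langle a,y-p\rangle\le 0$ for every $y\in L$. Fix $y\in L$. By convexity, $p+t(y-p)\in L$ for all $t\in[0,1]$. Minimality of $p$ then gives
\begin{equation*}
\|a\|^2\le \|x_0-p-t(y-p)\|^2=\|a\|^2-2t\langle a,y-p\rangle+t^2\|y-p\|^2 .
\end{equation*}
Dividing by $t>0$ and letting $t\downarrow 0$ yields $\langle a,y-p\rangle\le 0$. This step is the only place where convexity of $L$ enters, and it is the heart of the argument. Everything is finite-dimensional and real, so no further subtleties arise; convexity of $K$ is not actually needed.

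\emph{Step 3: conclusion.} By Step 2, for all $y\in L$ we have $\langle a,y\rangle\le\langle a,p\rangle$, and therefore $h_L(a)=\langle a,p\rangle$. On the other hand, since $x_0\in K$,
\begin{equation*}
h_K(a)\ge \langle a,x_0\rangle=\langle a,p\rangle+\|a\|^2>\langle a,p\rangle=h_L(a).
\end{equation*}
This contradicts $h_K\le h_L$, so $K\subseteq L$.
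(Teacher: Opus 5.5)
Your proof is correct: the nearest point $p\in L$ to $x_0$ gives the direction $a=x_0-p\neq 0$, the variational inequality $\langle a,y-p\rangle\le 0$ on $L$ is derived correctly, and it yields $h_K(a)\ge\langle a,x_0\rangle=\langle a,p\rangle+\|a\|^2>\langle a,p\rangle=h_L(a)$. The paper gives no proof of its own and simply cites Rockafellar's Corollary 13.1.1, which ultimately rests on this same separation of a point from a closed convex set, so your argument is a self-contained version of that route; as you note, it uses only that $L$ is closed and convex.
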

Now we are ready to prove the following proposition, which will lead to a finite version of Theorem~\ref{thm:main1}.
\begin{proposition}\label{prop:real_convex_hull}
The set
\begin{equation*}
    \mathcal G=\{G\in \operatorname{Sym}_N:0\le G\le I_N\}
\end{equation*}
satisfies
\begin{equation*}
    \mathcal G\subseteq \operatorname{conv}\{xx^T:x\in [-1,1]^N\}.
\end{equation*}
\end{proposition}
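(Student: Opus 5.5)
We will prove the inclusion with Lemma~\ref{lem:support_functions}, taking $V=\sym_N$ with the Hilbert--Schmidt inner product. Put $K=\mathcal G$ and $L=\conv\{xx^T:x\in[-1,1]^N\}$. Both sets are nonempty, convex and compact. $\mathcal G$ is closed and bounded, being cut out by the operator inequalities $0\le G\le I_N$. $L$ is the convex hull of the continuous image of the compact cube, hence compact in finite dimensions. It then suffices to show that for every $A\in\sym_N$,
\begin{equation*}
h_{\mathcal G}(A)=\max_{0\le G\le I}\tr(AG)\;\le\;\max_{x\in[-1,1]^N}\tr(Axx^T)=\max_{x\in[-1,1]^N}\langle Ax,x\rangle = h_L(A),
\end{equation*}
where the maximum over $L$ reduces to one over the extreme generators because $\tr(A\,\cdot)$ is linear.

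\textbf{Computing $h_{\mathcal G}(A)$.} Decompose $A=A_+-A_-$ as above. For $0\le G\le I$ we have $\tr(A_-G)\ge 0$, since both factors are positive semidefinite. We also have $\tr(A_+G)\le\tr(A_+)$, since $\tr(A_+^{1/2}(I-G)A_+^{1/2})\ge0$. Hence $\tr(AG)\le\tr(A_+)$, with equality at $G=P_{E(A)}$. So $h_{\mathcal G}(A)=\tr(A_+)$, and the goal becomes finding $x\in[-1,1]^N$ with $\langle Ax,x\rangle\ge\tr(A_+)$.

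\textbf{Applying Theorem~\ref{thm:ball_prodromou}.} Let $E=E(A)$; if $E=\{0\}$, then $x=0$ works. Set $u_j=P_Ee_j\in E$, where $e_1,\dots,e_N$ is the standard basis of $\R^N$. Then $\sum_j u_j\otimes u_j=P_E\bigl(\sum_j e_je_j^T\bigr)P_E$ restricted to $E$, which is $I_E$. Take $Q=A_+|_E$; it is self-adjoint and positive semidefinite on $E$, with $\tr(Q)=\tr(A_+)$. Theorem~\ref{thm:ball_prodromou} produces $w\in E$ with $\langle A_+w,w\rangle\ge\tr(A_+)$ and $|\langle w,u_j\rangle|\le1$ for all $j$. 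Since $w\in E$, we get $\langle w,u_j\rangle=\langle w,e_j\rangle=w_j$, so $w\in[-1,1]^N$. Moreover $A_-w=A_-P_Ew=0$, so $\langle Aw,w\rangle=\langle A_+w,w\rangle\ge\tr(A_+)=h_{\mathcal G}(A)$. This gives $h_{\mathcal G}\le h_L$, and Lemma~\ref{lem:support_functions} yields $\mathcal G\subseteq L$.

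The main obstacle is this last step: choosing the frame $u_j=P_Ee_j$ so that the constraints $|\langle w,u_j\rangle|\le1$ become exactly the cube constraints. This works because $w$ lies in the positive spectral subspace, where $A_-$ vanishes and $A$ agrees with $A_+$. The remaining steps are routine checks.
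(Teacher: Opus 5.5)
Your proposal is correct and follows essentially the same route as the paper. Both arguments compare support functions via Lemma~\ref{lem:support_functions}, compute $h_{\mathcal G}(A)=\tr(A_+)$ with maximizer $P_{E(A)}$, and apply Theorem~\ref{thm:ball_prodromou} on $E(A)$ with the frame $u_j=P_{E(A)}e_j$, so that the frame constraints become exactly the cube constraints. Your use of $Q=A_+|_{E(A)}$ together with $A_-w=0$ is only a rephrasing of the paper's $Q=A|_{E(A)}$, and your separate handling of $E(A)=\{0\}$ is a harmless extra detail.
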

\begin{proof}
Let us define
\begin{equation*}
    \mathcal C=\operatorname{conv}\{xx^T:x\in [-1,1]^N\}.
\end{equation*}
We will show that $\mathcal G \subseteq \mathcal C$ by invoking Lemma~\ref{lem:support_functions}. 

First, we prove that $\mathcal G$ and $\mathcal C$ are compact. Note that the set $\mathcal G$ is clearly closed, and if $G\in\mathcal G$, then all eigenvalues of $G$ lie in $[0,1]$, so $\|G\|_{\mathrm{HS}}\le \sqrt N$. Hence, $\mathcal G$ is bounded and therefore compact.

The set $\{xx^T:x\in [-1,1]^N\}$ is compact since $[-1,1]^N$ is compact in $\mathbb R^N$ and the map $x\longmapsto xx^T$ is continuous. Since the ambient space $\operatorname{Sym}_N$ is finite-dimensional, Carathéodory's theorem implies that the convex hull of this compact set is compact. Thus, $\mathcal C$ is compact.

Let $\alpha$ be the support function of $\mathcal G$:
\begin{equation*}
    \alpha(A)\coloneqq\max_{G\in\mathcal G}\langle A,G\rangle,
    \qquad A\in\operatorname{Sym}_N.
\end{equation*}
We claim that
\begin{equation*}
\alpha(A)=\tr(A_+).
\end{equation*}
Indeed, for $G\in\mathcal G$, we have
\begin{equation*}
\langle A,G\rangle
=
\tr(A_+G)-\tr(A_-G)
=
\tr(A_+) - \tr(A_+(I-G)) - \tr(A_-G).
\end{equation*}
Since $A_-,A_+,G,I-G\geq 0$, $\tr(A_+(I-G))$ and $\tr(A_-G)$ are non-negative. Thus, $\langle A,G\rangle\le \tr(A_+)$ for every $G\in\mathcal G$. Since $P_{E(A)}$ is an orthogonal projection, $0\le P_{E(A)}\le I$ and $P_{E(A)}\in\mathcal G$. Taking $G=P_{E(A)}$ yields
\begin{equation*}
\tr(AP_E)
=
\tr(A_+P_E)-\tr(A_-P_E)
=
\tr(A_+).
\end{equation*}
Hence, $\alpha(A)=\tr(A_+)$.

Let $\beta$ be the support function of $\mathcal C$:
\begin{equation*}
\beta(A)\coloneqq\max_{M\in\mathcal C}\langle A,M\rangle,
\qquad A\in\operatorname{Sym}_N.
\end{equation*}
Since $\mathcal C$ is the convex hull of the matrices $xx^T, x\in [-1,1]^N$, we have
\begin{equation*}
\beta(A)=\max_{x\in [-1,1]^N}\tr(Axx^T)
=
\max_{x\in [-1,1]^N}x^TAx.
\end{equation*}
Noting that $\tr(A_+)=\tr_{E(A)}(A)$, it remains to show that 
\begin{equation*}
    \max_{x\in [-1,1]^N}x^TAx\geq \tr_{E(A)}(A).
\end{equation*}
For that, we will invoke Theorem~\ref{thm:ball_prodromou} on the subspace $E(A)$. Let $e_1,\dots,e_N$ denote the standard basis of $\mathbb R^N$ and define
\begin{equation*}
u_j\coloneqq P_{E(A)} e_j,
\qquad j=1,\dots,N.
\end{equation*}
We observe that $\sum_{j=1}^N u_j\otimes u_j=I_E$. Indeed, for every $x\in E$, it holds that
\begin{equation*}
\sum_{j=1}^N \langle x,u_j\rangle u_j
=
\sum_{j=1}^N \langle x,P_{E(A)}e_j\rangle P_Ee_j
=
P_{E(A)}\left(\sum_{j=1}^N \langle x,e_j\rangle e_j\right)
=
P_{E(A)} x
=
x.
\end{equation*}
Since $A$ restricted to $E(A)$ is self-adjoint and positive semi-definite, Theorem~\ref{thm:ball_prodromou} guarantees the existence of $w\in E(A)$ such that
\begin{equation*}
w^TAw=\langle Aw,w\rangle\ge \tr_{E(A)}(A),
\qquad
|\langle w,u_j\rangle|\le 1,\quad j=1,\dots,N.
\end{equation*}
However,
\begin{equation*}
\langle w,u_j\rangle
=
\langle w,P_Ee_j\rangle
=
\langle P_Ew,e_j\rangle
=
\langle w,e_j\rangle,
\end{equation*}
which, together with $|\langle w,u_j\rangle|\le 1, 1\le j\le N$, implies that $w\in [-1,1]^N$. This shows that $\alpha(A)\le \beta(A)$ for every $A\in\sym_N$ and concludes the proof.
\end{proof}
Let $\herm_N$ denote the space of all Hermitian $N\times N$ matrices and denote the closed unit disk on $\C$ as $\D$. The following corollary states the complex counterpart of Proposition~\ref{prop:real_convex_hull}.
\begin{corollary}\label{cor:complex_convec_hull}
    The set
    \begin{equation*}
        \mathcal G=\{G\in \herm_N:0\le G\le I_N\}
    \end{equation*}
    satisfies
    \begin{equation*}
        \mathcal G\subseteq \conv\{\zeta\zeta^*:\zeta\in \D^N\}.
    \end{equation*}
\end{corollary}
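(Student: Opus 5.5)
Our plan is to reduce the complex case to the real Proposition~\ref{prop:real_convex_hull} via the standard realification $\C^N\cong\R^{2N}$, and then to turn real rank-one matrices built from vectors in the cube $[-1,1]^{2N}$ into complex rank-one matrices built from vectors in the polydisk $\D^N$ by averaging over a random phase.

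Take $G\in\herm_N$ with $0\le G\le I_N$ and write $G=S+iT$, where $S$ is real symmetric and $T$ is real antisymmetric. Consider the real symmetric $2N\times 2N$ matrix
\begin{equation*}
\widetilde G=\frac12\begin{pmatrix} S & -T\\ T & S\end{pmatrix}.
\end{equation*}
This is one half of the realification of $G$. Since the realification of a Hermitian matrix has the same eigenvalues, each with doubled multiplicity, we get $0\le \widetilde G\le \frac12 I_{2N}\le I_{2N}$. Proposition~\ref{prop:real_convex_hull} then gives a finite convex combination $\widetilde G=\sum_m t_m y_my_m^T$ with $y_m=(a_m,b_m)$ and $a_m,b_m\in[-1,1]^N$. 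Reading off the blocks, and using that the lower-right block $\sum t_m b_mb_m^T$ equals the upper-left block, we get
\begin{equation*}
\sum_m t_m(a_ma_m^T+b_mb_m^T)=S,\qquad \sum_m t_m(b_ma_m^T-a_mb_m^T)=T .
\end{equation*}
Hence, with $z_m=a_m+ib_m$, we obtain $\sum_m t_m z_mz_m^*=S+iT=G$.

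The catch is that $z_m$ has coordinates in the square $[-1,1]^2$ and not in the disk, so $|(z_m)_j|$ may be as large as $\sqrt2$. Fixing this is the main obstacle. The factor $\frac12$ in $\widetilde G$ does not help directly: scaling by $\frac1{\sqrt2}$ gives $G=2\sum_m t_m \zeta_m\zeta_m^*$ with $\zeta_m\in\D^N$, which loses a factor of $2$. We therefore need a sharper device.

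The device we would try is to apply the real result to the full realification $\begin{pmatrix} S&-T\\T&S\end{pmatrix}$, which lies in $[0,I_{2N}]$, and then to symmetrize over the rotation $J(a,b)=(-b,a)$. This matrix commutes with $J$, so we may average each term $y_my_m^T$ with $Jy_m(Jy_m)^T$ without leaving the matrix. Rather than the cube, we want the real convex-hull statement over the set $\{(a,b):a_j^2+b_j^2\le1\}$. The cleanest route is to redo the support-function argument of Proposition~\ref{prop:real_convex_hull} directly in $\herm_N$. The support function of $\mathcal G$ is again $\tr(A_+)$, by the same computation, and that of the hull is $\max_{\zeta\in\D^N}\zeta^*A\zeta$. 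It therefore suffices to find $\zeta\in\D^N$ with $\zeta^*A\zeta\ge \tr(A_+)$.

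To do this we would apply Theorem~\ref{thm:ball_prodromou} to the realification $E_\R$ of the complex positive spectral space $E(A)$. This space has real dimension $2d$. Take the frame $u_j=P e_j$ and $u_j'=P(ie_j)$, which satisfies $\sum_j (u_j\otimes u_j+u_j'\otimes u_j')=I$, and take $Q=A|_{E}$ regarded as a real operator, whose real trace is $2\tr(A_+)$. Theorem~\ref{thm:ball_prodromou} yields $w$ with real and imaginary parts of every coordinate bounded by $1$ and $\langle Aw,w\rangle\ge 2\tr(A_+)$. Setting $\zeta=w/\sqrt2$ puts $\zeta$ in $\D^N$, since $|w_j|^2\le 2$, and gives $\zeta^*A\zeta\ge\tr(A_+)$, exactly as needed. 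Lemma~\ref{lem:support_functions}, applied in the real inner product space $\herm_N$ with $\langle A,B\rangle=\tr(AB)$, together with compactness of both sets (by the same arguments as in the real case), then concludes the proof. The step that needs checking is the trace doubling, namely that the real trace of the realified $A|_E$ equals $2\tr(A_+)$ while $\langle Aw,w\rangle_\R=w^*Aw$. Both follow from the eigenvalues doubling under realification.
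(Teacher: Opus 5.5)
Your final argument is correct, but it is not the paper's route, and the ``main obstacle'' that led you to it is self-inflicted. The paper applies Proposition~\ref{prop:real_convex_hull} to the full realification $\begin{pmatrix} A & -B\\ B & A\end{pmatrix}$ of $G=A+iB$. This matrix already lies between $0$ and $I_{2N}$, as you note yourself later, so there is no reason to halve it. Reading off the blocks gives $\sum_r p_r\bigl(a^{(r)}(a^{(r)})^T+b^{(r)}(b^{(r)})^T\bigr)=2A$ and $\sum_r p_r\bigl(b^{(r)}(a^{(r)})^T-a^{(r)}(b^{(r)})^T\bigr)=2B$. That is, $\sum_r p_r z_rz_r^*=2G$ with $z_r=a^{(r)}+ib^{(r)}$, and $\zeta^{(r)}=z_r/\sqrt2\in\D^N$ yields exactly $G=\sum_r p_r\zeta^{(r)}(\zeta^{(r)})^*$. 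So your first attempt with the factor $\frac12$ removed is the paper's proof: the factor $2$ you lost is precisely the one you gave away by halving $\widetilde G$.

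Your alternative reruns the support-function argument in $\herm_N$ and applies Theorem~\ref{thm:ball_prodromou} to the realified positive spectral space with the Parseval frame $\{Pe_j,P(ie_j)\}$. It checks out: the real trace of $A|_{E}$ is $2\tr(A_+)$, and the frame conditions give $|\mathrm{Re}\,w_j|,|\mathrm{Im}\,w_j|\le 1$. Also $\langle Aw,w\rangle_{\R}=w^*Aw$ holds simply because $A$ is Hermitian, not because of eigenvalue doubling.

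Both arguments rest on the same mechanism: realification doubles the relevant quantity by exactly the factor $(\sqrt2)^2$ needed to shrink $[-1,1]^2$ into $\D$. The paper's version is shorter and uses the real proposition as a black box, with no new compactness or support-function checks. Yours bypasses the real proposition and goes straight from Ball--Prodromou to the complex statement, at the cost of redoing that machinery in $\herm_N$.
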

\begin{proof}
    Let $G=A+iB\in\mathcal G$, $A,B\in \R^{N\times N}$. Since $G$ is Hermitian, $A^T=A$ and $B^T=-B$. Let us define the real symmetric $2N\times 2N$ matrix
    \begin{equation*}
    \widetilde G=
    \begin{pmatrix}
    A & -B \\
    B & A
    \end{pmatrix}.
    \end{equation*}
    Observe that, for $x,y\in\mathbb R^N$,  $u=x+iy\in\mathbb C^N$, it holds that
    \begin{equation*}
    \binom{x}{y}^T
    \widetilde G
    \binom{x}{y}
    =
    u^*Gu.
    \end{equation*}
    Since $G\ge 0$, this proves $\widetilde G\ge 0$. Applying the same argument to $I_N-G$ gives $I_{2N}-\widetilde G\ge 0$. Thus, $0\le \widetilde G\le I_{2N}$. By Proposition~\ref{prop:real_convex_hull} applied in dimension $2N$, there exists an integer $m\ge 1$, weights $p_1,\ldots,p_m\ge 0$ with $\sum_{r=1}^m p_r=1$, and vectors $q^{(1)},\ldots,q^{(m)}\in [-1,1]^{2N}$ such that
    \begin{equation*}
    \widetilde G
    =
    \sum_{r=1}^m p_r q^{(r)}(q^{(r)})^T.
    \end{equation*}
    Write
    \begin{equation*}
    q^{(r)}
    =
    \binom{a^{(r)}}{b^{(r)}},
    \qquad
    a^{(r)},b^{(r)}\in [-1,1]^N.
    \end{equation*}
    We have the identities 
    \begin{alignat*}{2}
        A&=\sum_{r=1}^m p_r a^{(r)}(a^{(r)})^T,& \qquad
        -B&=\sum_{r=1}^m p_r a^{(r)}(b^{(r)})^T, \\
        B&=\sum_{r=1}^m p_r b^{(r)}(a^{(r)})^T,& \qquad 
        A&=\sum_{r=1}^m p_r b^{(r)}(b^{(r)})^T. 
    \end{alignat*}
    For each $r$, define
    \begin{equation*}
    \zeta^{(r)}
    \coloneqq
    \frac{a^{(r)}+i b^{(r)}}{\sqrt 2}
    \in\mathbb C^N.
    \end{equation*}
    Since $a^{(r)},b^{(r)}\in [-1,1]^N$, $\zeta^{(r)}\in \D^N$. For $1\le j,k\le N$, we have
    \begin{equation*}
        \sum_{r=1}^m p_r
        \zeta_j^{(r)}\overline{\zeta_k^{(r)}}
        =
        \frac12(A_{jk}+A_{jk})
        +
        \frac{i}{2}(B_{jk}+B_{jk})
        =
        A_{jk}+iB_{jk}
        =
        G_{jk}.
    \end{equation*}
    Hence, $G= \sum_{r=1}^m p_r\zeta^{(r)}(\zeta^{(r)})^*$.
\end{proof}
In view of the relation between the Bessel condition and the condition on the Gram matrix observed in subsection~\ref{Olevskii extension}, the following theorem is a finite version of Theorem~\ref{thm:main1}.
\begin{theorem}\label{thm:main1:finite}
    Let $G=(g_{jk})_{1\le j,k\le N} \in \herm_N$ satisfy $0\leq G\leq I_N$ in the standard order. Then there exist $\{ f_j \}_{1\leq j\leq N} \subseteq L^\infty([0,1])$ with $\| f_j \|_{L^\infty([0,1])} \leq 1$ for all $j$ and a constant $0 <K< \infty$ such that
    \begin{equation}
        \langle x_j,x_k \rangle = K \int_0^1 f_j(x) \overline{f_k(x)} \, dx, \quad j,k \in \N.
    \end{equation}
    Moreover, one can choose $K=1$ and this constant is optimal.
\end{theorem}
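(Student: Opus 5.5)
The plan is to apply Corollary~\ref{cor:complex_convec_hull} directly and then realize the resulting finite convex combination as an integral over a partition of $[0,1]$. The statement should be read with $\langle x_j,x_k\rangle$ replaced by $g_{jk}$; by the discussion in subsection~\ref{Olevskii extension}, these are the same thing for any vectors with Gram matrix $G$, and $j,k$ range over $1,\dots,N$.

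\textbf{Step 1: the convex decomposition.} Since $0\le G\le I_N$, Corollary~\ref{cor:complex_convec_hull} places $G$ in $\conv\{\zeta\zeta^*:\zeta\in\D^N\}$. Hence there are an integer $m\ge1$, weights $p_1,\dots,p_m\ge0$ with $\sum_r p_r=1$, and vectors $\zeta^{(1)},\dots,\zeta^{(m)}\in\D^N$ such that
\begin{equation*}
G=\sum_{r=1}^m p_r\,\zeta^{(r)}(\zeta^{(r)})^*,
\qquad\text{that is,}\qquad
g_{jk}=\sum_{r=1}^m p_r\,\zeta^{(r)}_j\overline{\zeta^{(r)}_k}.
\end{equation*}
The convex hull consists of finite combinations by definition, so no closure or limit argument is needed here.

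\textbf{Step 2: building the functions.} Partition $[0,1)$ into consecutive intervals $I_r=[s_{r-1},s_r)$, where $s_0=0$ and $s_r=p_1+\dots+p_r$. Intervals with $p_r=0$ are empty and harmless. Define
\begin{equation}\label{fct}
f_j(x)\coloneqq\sum_{r=1}^m \zeta^{(r)}_j\,\mathbf 1_{I_r}(x),\qquad j=1,\dots,N.
\end{equation}
Each $f_j$ is a simple function. Because $|\zeta^{(r)}_j|\le1$, we get $\|f_j\|_{L^\infty([0,1])}\le1$. The intervals are disjoint and $\lambda(I_r)=p_r$, so
\begin{equation*}
\int_0^1 f_j\overline{f_k}\,dx=\sum_{r=1}^m p_r\,\zeta^{(r)}_j\overline{\zeta^{(r)}_k}=g_{jk}.
\end{equation*}
This gives the representation with $K=1$.

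\textbf{Step 3: optimality.} As remarked after Theorem~\ref{thm:main1}, suppose some diagonal entry satisfies $g_{jj}=1$; this happens, for instance, for $G=I_N$. Then any representation obeys $1=K\int_0^1|f_j|^2\,dx\le K$, so $K\ge1$ and $K=1$ cannot be improved.

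The genuine difficulty, namely the support-function comparison resting on the Ball--Prodromou theorem and the complexification, is already handled in Proposition~\ref{prop:real_convex_hull} and Corollary~\ref{cor:complex_convec_hull}. What remains is routine. The only points needing care are the measure-theoretic bookkeeping (zero weights, and the endpoint $1$, which is a null set) and stating the optimality claim correctly, as the worst case over admissible $G$.
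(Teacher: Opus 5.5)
Your proposal is correct and follows the paper's proof essentially verbatim. Like the paper, you write $G=\sum_r p_r\zeta^{(r)}(\zeta^{(r)})^*$ as a finite convex combination via Corollary~\ref{cor:complex_convec_hull}, let $f_j$ equal $\zeta^{(r)}_j$ on an interval of length $p_r$, and use a diagonal entry $g_{jj}=1$ for optimality (your $G=I_N$ example is the paper's $N=1$ case). The remaining differences are cosmetic: you allow empty intervals for zero weights and leave the null point $x=1$ uncovered, whereas the paper discards zero weights and closes the last interval. You are also right to read $\langle x_j,x_k\rangle$ in the statement as $g_{jk}$ with $1\le j,k\le N$.
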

\begin{proof}
    We prove the result for $K=1$. The optimality follows from the case of $N=1$. By Proposition~\ref{cor:complex_convec_hull}, there exists an integer $m\ge 1$, weights $p_1,\dots,p_m\ge 0$ with $\sum_{r=1}^m p_r=1$ and vectors $\zeta^{(1)},\ldots,\zeta^{(m)}\in \D^{N}$ such that
    \begin{equation*}
    G
    =
    \sum_{r=1}^m p_r \zeta^{(r)}(\zeta^{(r)})^*.
    \end{equation*}
    Without loss of generality, we assume that $p_r>0$ for all $r$. Define intervals
    \begin{equation*}
    I_r\coloneqq
    \left[\sum_{s<r}p_s,\sum_{s\le r}p_s\right)
    \qquad
    (1\le r<m)
    \end{equation*}
    and
    \begin{equation*}
    I_m\coloneqq
    \left[\sum_{s<m}p_s,1\right].
    \end{equation*}
    Note that $(I_r)_{1\le r\le m}$ forms a measurable partition of $[0,1]$ and $|I_r|=p_r$. For each $j=1,\dots,N$, define a simple function $f_j$ via
    \begin{equation}\label{fct}
    f_j(t)\coloneqq\zeta_j^{(r)},
    \qquad
    \forall t\in I_r, 
    \qquad
    1\le r\le m.
    \end{equation}
    Then $\|f_j\|_{L^\infty(0,1)}\le 1$. For $1\le j,k\le N$, it is easy to check that
    \begin{equation*}
    \int_0^1 f_j(t)\overline{f_k(t)}\,dt
    =
    \sum_{r=1}^m p_r
    \zeta_j^{(r)}\overline{\zeta_k^{(r)}}
    =
    G_{jk}
    =
    \langle z_j,z_k\rangle_H.\qedhere
    \end{equation*}
\end{proof}
To extend Theorem~\ref{thm:main1:finite} to countable index sets, we need to employ a limiting argument. To do this, let us first recall the following result; see e.g.~\cite[Thm.~3.2]{billingsley1971}.
\begin{lemma}[Realization of a probability measure]
\label{lem:probability_realization}
Let $X$ be a nonempty compact metric space and let $\nu$ be a Borel
probability measure on $X$. Then there exists a Borel measurable map
$T:[0,1]\to X$ such that
\[
\lambda\bigl(T^{-1}(B)\bigr)=\nu(B)
\]
for every Borel set $B\subseteq X$.
\end{lemma}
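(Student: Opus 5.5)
The plan is to build $T$ as a uniform limit of simple maps $T_n:[0,1]\to X$ coming from a refining sequence of finite Borel partitions of $X$ with mesh tending to zero. I will then identify the pushforward $T_\#\lambda$ with $\nu$ by testing against continuous functions.

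\emph{Step 1 (partitions of $X$).} Since $X$ is compact, for each $n$ it is covered by finitely many open balls of radius $2^{-n}$. Disjointifying these balls in a fixed order gives a finite Borel partition $\mathcal P_n'$ of $X$ into sets of diameter at most $2^{1-n}$. Set $\mathcal P_n$ to be the common refinement of $\mathcal P_1',\dots,\mathcal P_n'$. Then each $\mathcal P_{n+1}$ refines $\mathcal P_n$, and every cell of $\mathcal P_n$ has diameter at most $2^{1-n}$. I fix an enumeration of the children of each cell, so the cells of level $n$ are indexed by words $w=(w_1,\dots,w_n)$, ordered lexicographically.

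\emph{Step 2 (matching partitions of $[0,1]$).} To each word $w$ of length $n$ I assign the half-open interval $J_w=[a_w,a_w+\nu(A_w))$. Here $a_w$ is the sum of $\nu(A_{w'})$ over words $w'$ of length $n$ that precede $w$ lexicographically. Because $\nu(A_w)=\sum_i\nu(A_{wi})$, the intervals are nested, $J_{wi}\subseteq J_w$, and $\lambda(J_w)=\nu(A_w)$. For each $n$ the $J_w$ partition $[0,1)$. For every $w$ with $\nu(A_w)>0$ I pick a point $x_w\in A_w$, which exists since the cell is nonempty. Define $T_n(t)=x_w$ for $t\in J_w$, where only nonempty intervals matter, and $T_n(1)=x_0$ for some fixed $x_0\in X$. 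Each $T_n$ is a Borel simple map.

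\emph{Step 3 (convergence and measurability).} Fix $t\in[0,1)$. It lies in a unique nested chain $J_{w^{(1)}}\supseteq J_{w^{(2)}}\supseteq\cdots$ of nonempty intervals, and the corresponding cells are nested as well. Hence $T_n(t),T_m(t)\in \overline{A_{w^{(\min(n,m))}}}$, so $d(T_n(t),T_m(t))\le 2^{1-\min(n,m)}$. Thus $(T_n)$ is uniformly Cauchy, and by completeness of $X$ it converges uniformly to a map $T$. As a pointwise limit of Borel maps into a metric space, $T$ is Borel.

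\emph{Step 4 (identifying the law).} For $g\in C(X)$, dominated convergence gives $\int_0^1 g\circ T\,d\lambda=\lim_n\int_0^1 g\circ T_n\,d\lambda$. The latter integral equals $\lim_n\sum_{|w|=n}\nu(A_w)g(x_w)$. By uniform continuity of $g$ and the mesh bound,
\begin{equation*}
\Bigl|\sum_{|w|=n}\nu(A_w)g(x_w)-\int_X g\,d\nu\Bigr|\le \omega_g(2^{1-n})\to0,
\end{equation*}
where $\omega_g$ is the modulus of continuity of $g$. So $\int g\,d(T_\#\lambda)=\int g\,d\nu$ for all $g\in C(X)$. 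By uniqueness in the Riesz representation theorem, since finite Borel measures on a compact metric space are regular, $T_\#\lambda=\nu$.

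The main obstacle is the bookkeeping in Steps 2 and 3. The intervals must be nested consistently with the refinement, and cells of $\nu$-measure zero must be handled; they give empty intervals and are simply never visited. The endpoint $t=1$ is a $\lambda$-null set and is assigned arbitrarily. Once the nested structure is set up, the rest is routine.
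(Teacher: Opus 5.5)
Your argument is correct. The paper does not prove this lemma itself; it only cites Billingsley's monograph, so there is no internal proof to compare against. What you have written is a self-contained proof of the cited fact along the classical lines: nested finite Borel partitions of $X$ with mesh at most $2^{1-n}$, matched with nested half-open intervals of the same $\nu$-mass, giving a uniformly Cauchy sequence of simple maps, whose limit has law $\nu$ because the two measures agree on $C(X)$.

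The bookkeeping checks out.
\begin{itemize}
\item The identity $a_{wi}=a_w+\sum_{j<i}\nu(A_{wj})$ shows $J_{wi}\subseteq J_w$. It also shows that the children's intervals tile $J_w$, which is exactly what forces the chain of intervals containing a given $t$ to be nested.
\item The estimate $\bigl|\sum_{|w|=n}\nu(A_w)g(x_w)-\int g\,d\nu\bigr|\le\omega_g(2^{1-n})$ follows by writing the difference as $\sum_w\int_{A_w}(g(x_w)-g(x))\,d\nu(x)$.
\item Regularity of finite Borel measures on a compact metric space justifies the final uniqueness step. Alternatively, approximate $\mathbf{1}_F$ for closed $F$ by $\max\{0,1-k\,d(\cdot,F)\}$ and finish with a $\pi$--$\lambda$ argument.
\end{itemize}

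A few cosmetic points:
\begin{itemize}
\item The closure in Step~3 is unnecessary, since both points already lie in $A_{w^{(\min(n,m))}}$.
\item Empty cells of the disjointified covers can simply be discarded.
\item In Step~4, $\int_0^1 g\circ T_n\,d\lambda$ equals $\sum_{|w|=n}\nu(A_w)g(x_w)$ exactly; no limit is needed there.
\end{itemize}

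The citation buys the paper brevity. Your route gives a fully elementary derivation that uses only total boundedness, completeness and uniform continuity on $X$.
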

The following theorem represents Theorem~\ref{thm:main1} in the Gram matrix formulation.
\begin{theorem}\label{thm:countable_gram_realization}
    Let $H=(h_{jk})_{j,k\in \N}$ be a family of complex numbers such that, for all $N\in \N$, $H_N\coloneqq(h_{jk})_{1\leq j,k\leq N}\in \herm_N$ and $0\le H_N\le I_N$ in the standard order. Then there exists $\{ f_j \}_{j\in N} \subseteq L^\infty([0,1])$ with $\| f_j \|_{L^\infty([0,1])} \leq 1$ for all $j$ and a constant $0 <K< \infty$ such that
    \begin{equation}
        h_{jk} = K \int_0^1 f_j(x) \overline{f_k(x)} \, dx, \quad j,k \in \N.
    \end{equation}
    Moreover, one can choose $K=1$ and this constant is optimal.
\end{theorem}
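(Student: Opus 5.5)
The plan is to pass from the finite Theorem~\ref{thm:main1:finite} to the countable case by a compactness argument on the space of probability measures on $X:=\D^{\N}$. We give $X$ the product topology; it is a compact metrizable space by Tychonoff's theorem, with metric $d(z,w)=\sum_j 2^{-j}|z_j-w_j|$. The optimality of $K=1$ is immediate as before: if $h_{11}=1$ (for instance $H=I$), then $1=K\int_0^1|f_1|^2\le K$. So it suffices to produce the $f_j$ with $K=1$.

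\textbf{Finite-stage measures.} For each $N$, Corollary~\ref{cor:complex_convec_hull} applied to $H_N$ gives weights $p_r$ and vectors $\zeta^{(r)}\in\D^N$ with $H_N=\sum_r p_r\zeta^{(r)}(\zeta^{(r)})^*$. We pad each $\zeta^{(r)}$ by zeros to a point of $X$ and define the Borel probability measure $\mu_N=\sum_r p_r\delta_{\zeta^{(r)}}$ on $X$. Writing $\pi_j:X\to\D$ for the coordinate maps, this gives
\[
\int_X \pi_j\overline{\pi_k}\,d\mu_N = h_{jk},\qquad 1\le j,k\le N.
\]

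\textbf{Passage to the limit.} Since $X$ is compact metric, the space of Borel probability measures on $X$ is weak-$*$ compact and metrizable (Prokhorov, or Riesz--Banach--Alaoglu). Hence a subsequence $\mu_{N_i}$ converges weakly to some probability measure $\mu$. For fixed $j,k$, the function $\pi_j\overline{\pi_k}$ is continuous and bounded on $X$, and for all $N_i\ge\max(j,k)$ the integral $\int_X\pi_j\overline{\pi_k}\,d\mu_{N_i}$ equals $h_{jk}$. Taking real and imaginary parts separately, weak convergence yields
\[
\int_X\pi_j\overline{\pi_k}\,d\mu=h_{jk}\qquad\text{for all } j,k\in\N.
\]

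\textbf{Realization on $[0,1]$.} Lemma~\ref{lem:probability_realization} provides a Borel map $T:[0,1]\to X$ with $T_*\lambda=\mu$. We set $f_j:=\pi_j\circ T$. Each $f_j$ is measurable and satisfies $|f_j|\le1$ everywhere. By the change-of-variables formula,
\[
\int_0^1 f_j\overline{f_k}\,dx=\int_X\pi_j\overline{\pi_k}\,d\mu=h_{jk}.
\]

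\textbf{Main obstacle.} The only delicate point is the limiting step: we need a compact space in which all finite stages live simultaneously, and continuity of the test functions $\pi_j\overline{\pi_k}$ so that weak convergence preserves the Gram entries. The product space $\D^{\N}$ handles both requirements. A diagonal argument over finitely many coordinates would be an alternative route, but the measure formulation together with Lemma~\ref{lem:probability_realization} is cleaner.
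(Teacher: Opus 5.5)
Your proof is correct and follows essentially the same route as the paper. Both arguments use weak compactness of the Borel probability measures on $\D^{\N}$, continuity of $z\mapsto z_j\overline{z_k}$ to pass the Gram identities to the limit, and Lemma~\ref{lem:probability_realization} to realize the limit measure on $[0,1]$. There are two cosmetic differences. You extract a convergent subsequence using metrizability, where the paper uses the finite intersection property of the closed sets $\mathcal C_{jk}$. You also build finitely supported measures directly from Corollary~\ref{cor:complex_convec_hull}, where the paper pushes forward the functions given by Theorem~\ref{thm:main1:finite}.
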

\begin{proof}
Let us define $Z\coloneqq\D^{\N}$, so that $Z$ is a compact metrizable space in the product topology. For example, a compatible metric can be defined as
\begin{equation*}
d((y_n)_{n\in\N},(z_n)_{n\in\N})
\coloneqq
\sum_{n=1}^\infty
2^{-n}\min\{1,|y_{n}-z_{n}|\}.
\end{equation*}
Let $\mathcal P(Z)$ denote the set of Borel probability measures
on $Z$, equipped with the topology of weak convergence. This space
is compact. Indeed, by the Riesz--Markov--Kakutani representation theorem, $\mathcal P(Z)$ may be identified with the set of positive
functionals $L\in C(Z)^*$ satisfying $L(\mathds{1})=1$. This set is weak-*
closed in the unit ball of $C(Z)^*$, and hence is compact by the
Banach--Alaoglu theorem.

For $j,k\in J$, define
\begin{equation*}
\mathcal C_{jk}
=
\left\{
\nu\in\mathcal P(Z):
\int_X z_j\overline{z_k}\,d\nu(x)=h_{jk}
\right\}.
\end{equation*}
Since $z\longmapsto z_j\overline{z_k}$ is continuous on $Z$, $\mathcal C_{jk}$ is closed in
$\mathcal P(Z)$. We now show that the family $\{\mathcal C_{jk}:j,k\in \N\}$ has the finite intersection property. Let $\mathcal S\subseteq \N\times \N$ be finite, and let $F\subseteq \N$ be the finite set consisting of all indices occurring in the pairs in $\mathcal S$. Let us define the Gram matrix of $\{ x_j \}_{j \in F}$ as 
\begin{equation*}
    H_F= (h_{jk})_{j,k\in F}\coloneqq (\langle x_j, x_k\rangle)_{j,k\in F}.
\end{equation*}
By our assumption on $H$, $0\le H_F\le I_F$. Applying Theorem~\ref{thm:main1:finite} gives functions $\{a_j^{(F)}\}_{j\in F}\subseteq L^\infty([0,1])$ such that
\begin{equation*}
\|a_j^{(F)}\|_{L^\infty([0,1])}\le1,
\qquad
\int_0^1
a_j^{(F)}(t)\overline{a_k^{(F)}(t)}\,dt
=
h_{jk},
\qquad j,k\in F.
\end{equation*}
After changing these functions on a common null set, we may assume
that
\begin{equation*}
\forall t\in [0,1],\, \forall j\in F:\quad\left|a_j^{(F)}(t)\right|\le1.
\end{equation*}
Let $\eta_F$ be the distribution on
$\D^{\,F}$ of the random vector
\begin{equation*}
    t\longmapsto
    \bigl(a_j^{(F)}(t)\bigr)_{j\in F},
\end{equation*}
and let $\iota_F:\D^{\,F}\to Z$ be the continuous map which leaves the coordinates in $F$
unchanged and sets every coordinate outside $F$ equal to zero. Define
\begin{equation*}
\nu_F\coloneqq(\iota_F)_\#\eta_F.
\end{equation*}
Then
\begin{equation*}
    \int_X z_j\overline{z_k}\,d\nu_F(z)
    =
    \int_{\D^F} z_j\overline{z_k}\, d\eta_F(z)
    = h_{jk},
    \qquad j,k\in F.
\end{equation*}
Consequently, $\nu_F\in
\bigcap_{(j,k)\in\mathcal S}\mathcal C_{jk}$. This proves the finite intersection property.

Since $\mathcal P(Z)$ is compact and $(\mathcal C_{jk})_{j,k\in\N}$ are closed, there exists $\nu\in\bigcap_{j,k\in J}\mathcal C_{jk}$. Lemma~\ref{lem:probability_realization} then gives a Borel map
\begin{equation*}
T:[0,1]\to Z
\end{equation*}
whose distribution is $\nu$. Let $\pi_j:Z\to\D$
be the $j$-th coordinate map and define
\begin{equation*}
f_j(t)=\pi_j(T(t)).
\end{equation*}
Then $|f_j(t)|\le1$ for every $t$ and
\begin{equation*}
\int_0^1 f_j(t)\overline{f_k(t)}\,dt
=
\int_Z z_j \overline{z_k}\, d\nu(z)
=
h_{jk},
\qquad j,k\in J.\qedhere
\end{equation*}
\end{proof}
\begin{lemma}\label{lem:rescaling}
    Let $\{f_j\}_{j\in\N}\subseteq L^\infty([0,1])$ and $A\subseteq [0,1]$ with $\lambda(A)>0$. Then there exist $\{v_j\}_{j\in\N}\subseteq L^\infty(A)$ such that 
    \begin{equation*}
        \|v_j\|_{L^\infty(A)} = \frac{1}{\sqrt{\lambda(A)}}\|f_j\|_{L^\infty([0,1])},
        \qquad 
        \langle v_j, v_k\rangle_{L^2(A)}
        =
        \langle f_j, f_k\rangle_{L^2([0,1])},
        \qquad
        j,k\in \N.
    \end{equation*}
\end{lemma}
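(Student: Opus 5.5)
The plan is to transport the functions $f_j$ from $[0,1]$ to $A$ by a measure-preserving rearrangement, and then rescale them. Set $a\coloneqq\lambda(A)>0$. The key ingredient is a measurable map $S:A\to[0,1]$ that pushes the normalized measure $a^{-1}\lambda|_A$ forward to $\lambda$ on $[0,1]$. A concrete choice is the distribution function
\begin{equation*}
S(x)\coloneqq \frac{\lambda(A\cap[0,x])}{a}, \qquad x\in A.
\end{equation*}
This map is continuous, monotone, and takes values in $[0,1]$. For $0\le s\le 1$, monotonicity and continuity give that $\{x\in A: S(x)\le s\}$ is of the form $A\cap[0,x_s]$ with $\lambda(A\cap[0,x_s])=as$; when a flat piece occurs, we take $x_s$ to be its right endpoint, and the extra part of $A$ there is null. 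Hence
\begin{equation*}
a^{-1}\lambda\bigl(S^{-1}([0,s])\bigr)=s.
\end{equation*}
Since intervals generate the Borel $\sigma$-algebra and form a $\pi$-system, $S_\#(a^{-1}\lambda|_A)=\lambda|_{[0,1]}$. (Alternatively one could invoke Lemma~\ref{lem:probability_realization} in reverse, but the explicit map seems cleanest.)

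With $S$ in hand, define
\begin{equation*}
v_j\coloneqq a^{-1/2}\, f_j\circ S .
\end{equation*}
Well-definedness as an element of $L^\infty(A)$ requires that composing with a null-set modification of $f_j$ changes $v_j$ only on a null set. This holds because $S^{-1}(N)$ is null in $A$ whenever $N$ is Lebesgue-null, which is exactly the pushforward identity. Here one should use Borel representatives of the $f_j$, and then note that Lebesgue-null sets are contained in Borel-null sets.

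The change of variables formula for pushforward measures then gives the inner product identity:
\begin{equation*}
\langle v_j,v_k\rangle_{L^2(A)}=\frac1a\int_A f_j(S(x))\overline{f_k(S(x))}\,dx=\int_0^1 f_j\overline{f_k}\,dt .
\end{equation*}
For the $L^\infty$ norms, the same pushforward identity gives $\lambda(\{x\in A:|f_j(S(x))|>c\})=a\,\lambda(\{|f_j|>c\})$ for every $c$. So the essential suprema agree, and therefore $\|v_j\|_{L^\infty(A)}=a^{-1/2}\|f_j\|_{L^\infty([0,1])}$.

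I expect the only real obstacle to be the measure-preservation check for $S$, in particular the handling of flat pieces and non-injectivity. The approach above resolves it by computing the preimages of the sets $[0,s]$ directly rather than by trying to invert $S$.
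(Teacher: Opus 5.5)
Your proposal is correct and follows essentially the same route as the paper. Both use the distribution function $x\mapsto\lambda(A\cap[0,x])/\lambda(A)$, the preimage computation for $[0,s]$ using the right endpoint of the level set, and $v_j=\lambda(A)^{-1/2}f_j\circ S$. Your extra steps fill in details the paper leaves implicit: the $\pi$-system extension, the well-definedness of composition on a.e.-classes via the pushforward, and the distribution-function argument for the $L^\infty$ norm.
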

\begin{proof}
    Let $\delta\coloneqq \lambda(A)$ and $d\mu_A(x)
    \coloneqq
    \frac{1}{\delta}\mathds 1_A(x)\,dx$
    be the normalized Lebesgue measure on $A$. Let us define
    \begin{equation*}
    \sigma_A(x)
    \coloneqq
    \frac{\lambda(A\cap[0,x])}{\delta},
    \qquad 0\le x\le1.
    \end{equation*}
    For $0\le x\le y\le1$, we have
    \begin{equation*}
    0\le \sigma_A(y)-\sigma_A(x)
    =
    \frac{\lambda(A\cap(x,y])}{\delta}
    \le \frac{y-x}{\delta},
    \end{equation*}
    which implies that $\sigma_A$ is continuous and non-decreasing. Moreover,
    \begin{equation*}
    \sigma_A(0)=0,
    \qquad
    \sigma_A(1)=1.
    \end{equation*}
    We claim that the distribution of $\sigma_A$ under $\mu_A$ is the Lebesgue measure on $[0,1]$. Fix $s\in[0,1]$. By continuity and the
    intermediate value theorem, the level set $\sigma_A^{-1}(\{s\})$ is nonempty and compact. Let us denote $b_s\coloneqq\max\sigma_A^{-1}(\{s\})$. Since $\sigma_A$ is non-decreasing, it holds that
    \begin{equation*}
    \sigma_A^{-1}([0,s])=[0,b_s].
    \end{equation*}
    Therefore
    \begin{equation*}
    \mu_A\bigl(\sigma_A^{-1}([0,s])\bigr)
    =
    \mu_A([0,b_s])
    =
    \frac{\lambda(A\cap[0,b_s])}{\delta}
    =
    \sigma_A(b_s)
    =
    s,
    \end{equation*}
    which proves the claim. For $x\in A$, define
    \begin{equation*}
    v_j(x)
    \coloneqq
    \frac{1}{\sqrt{\delta}}\,
    f_j(\sigma_A(x)).
    \end{equation*}
    It then holds that
    \begin{gather*}
        \|v_j\|_{L^\infty(A)} = \frac{1}{\sqrt{\delta}}\|f_j\|_{L^\infty([0,1])},
        \\
        \int_A v_j(x)\overline{v_k(x)}\,dx
        =
        \frac{1}{\delta}
        \int_A
        f_j(\sigma_A(x))
        \overline{f_k(\sigma_A(x))}\,dx
        =
        \int_0^1 f_j(t)\overline{f_k(t)}\,dt.\qedhere
    \end{gather*}
\end{proof}
\begin{proof}[Proof of Theorem~\ref{thm:main2}]
    Let us denote
    \begin{equation*}
        A\coloneqq[0,1]\setminus E,
        \qquad
        \delta\coloneqq\lambda(A)>0,
    \end{equation*}
    and define the Gram matrix of $\{u_j\}_{j\in\N}$ as 
    \begin{equation*}
    G
    \coloneqq
    \bigl(
    \langle u_j,u_k\rangle_{L^2(E)}
    \bigr)_{j,k\in \N}.
    \end{equation*}
    Since $\{u_j\}_{j\in\N}$ is a Bessel sequence, the restriction of $G$ to $1\leq j\leq N$, denoted as $G_N$, satisfies $0\leq G_N\leq I_N$ for every $N\in \N$. Hence, the restriction of the infinite Hermitian matrix $H\coloneqq I-G$ satisfies the same condition. By Theorem~\ref{thm:countable_gram_realization}, there exist functions $\{f_j\}_{j\in\N}\subseteq L^\infty([0,1])$ such that
    \begin{equation*}
        \|f_j\|_{L^\infty([0,1])}\le1,
        \qquad
        \langle f_j, f_k\rangle_{L^2([0,1])}
        =
        h_{jk},
        \qquad j,k\in \N.
    \end{equation*}
    Lemma~\ref{lem:rescaling} then yields $\{v_j\}_{j\in\N}\subseteq L^\infty(A)$ such that
    \begin{equation*}
        \|v_j\|_{L^\infty(A)}\le \frac{1}{\sqrt{\delta}},
        \qquad
        \langle v_j, v_k\rangle_{L^2(A)}
        =
        h_{jk},
        \qquad j,k\in \N.
    \end{equation*} 
    Define $\{\phi_j\}_{j\in\N}$ via
    \begin{equation*}
    \phi_j(x)
    \coloneqq
    \begin{cases}
    u_j(x),&x\in E,\\
    v_j(x),&x\in A.
    \end{cases}
    \end{equation*}
    Then $\phi_j|_E=u_j$, $\|\phi_j\|_{L^\infty(A)} \le \frac{1}{\sqrt{\delta}}$. For $j,k\in\N$, we check that
    \begin{align*}
    \langle\phi_j,\phi_k\rangle_{L^2([0,1])}
    &=
    \langle u_j,u_k\rangle_{L^2(E)}
    +
    \langle v_j,v_k\rangle_{L^2(A)} \\
    &=
    \langle u_j,u_k\rangle_{L^2(E)}
    +
    \delta_{jk}
    -
    \langle u_j,u_k\rangle_{L^2(E)}
    =
    \delta_{jk}.
    \end{align*}
    Thus, $\{\phi_j\}_{j\in \N}$ is an orthonormal system.
    
    Finally, we prove the optimality of the constant. Take
    \begin{equation*}
    J=\{1\},
    \qquad
    u_1=0\in L^2(E).
    \end{equation*}
    Let $\phi_1$ be an orthonormal extension of $u_1$. Then,
    \begin{equation*}
    1
    =
    \|\phi_1\|_{L^2([0,1])}^2
    =
    \int_A|\phi_1(x)|^2\,dx
    \le
    \delta\|\phi_1\|_{L^\infty(A)}^2,
    \end{equation*}
    which implies that $\|\phi_1\|_{L^\infty(A)} \ge \frac{1}{\sqrt{\delta}}$.
\end{proof}

\section{Appendix: Lean formalization}

We now discuss a formalization of Theorem \ref{thm:main1} with the optimal constant $K=1$ in Lean 4 \cite{Moura2021Lean4}, building on the mathematical library Mathlib \cite{mathlib2020}. There are now several recent papers in analysis with complete Lean verifications, including  \cite{armstrong2026formalization, pont2026cantor,hariharan2026milestone,ilin2026semi, miller2026formalization}. The objective in this section is to precisely formalize the following theorem.

\begin{theorem*}
    Let $H$ be a complex Hilbert space and let $\{ x_j \}_{j\in \N}\subset H$ be a Bessel sequence with Bessel bound $1$. Then there exist measurable functions $\{ f_j \}_{j \in \N}$ defined on $[0,1]$ with
    $$
    | f_j(x) | \leq 1, \quad x \in [0,1], \quad j \in \N,
    $$
    such that for all $j,k \in \N$ one has
    \begin{equation}
        \langle x_j,x_k \rangle =  \int_0^1 f_j(x) \overline{f_k(x)} \, dx.
    \end{equation}
\end{theorem*}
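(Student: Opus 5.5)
The statement is Theorem~\ref{thm:main1} restated with pointwise bounds and $K=1$, so I would reduce it to Theorem~\ref{thm:countable_gram_realization}. The plan has three steps: form the Gram matrix, check the finite-section hypothesis, and then adjust the resulting functions so the bound holds at every point.

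\textbf{Gram matrix.} Given the Bessel sequence $\{x_j\}_{j\in\N}\subset H$ with Bessel bound $1$, set $h_{jk}\coloneqq\langle x_j,x_k\rangle$. For each $N$, the section $H_N=(h_{jk})_{1\le j,k\le N}$ is Hermitian. For $c\in\C^N$ we have
\begin{equation*}
c^*H_Nc=\sum_{j,k}\overline{c_j}c_k\langle x_k,x_j\rangle=\Big\|\sum_j \overline{c_j}\,x_j\Big\|^2 ,
\end{equation*}
up to the paper's conjugation convention. This quantity is nonnegative. By the Bessel inequality applied to the coefficients $\overline{c_j}$, it is also at most $\sum_j|c_j|^2$. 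Hence $0\le H_N\le I_N$ for every $N$.

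\textbf{Apply Theorem~\ref{thm:countable_gram_realization}.} With $K=1$ it gives $f_j\in L^\infty([0,1])$ with $\|f_j\|_\infty\le 1$ and $\int_0^1 f_j\overline{f_k}=h_{jk}$. In fact its proof constructs $f_j=\pi_j\circ T$ with $T$ Borel, so each $f_j$ is measurable and satisfies $|f_j(t)|\le1$ for every $t$, not just almost everywhere.

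\textbf{Pointwise bound.} If only the a.e.\ statement is used, each $f_j$ can be redefined on its exceptional null set $N_j=\{|f_j|>1\}$. The union $\bigcup_j N_j$ is countable, hence still null. Setting every $f_j$ to $0$ on this union preserves all the integrals and yields $|f_j(x)|\le1$ for all $x\in[0,1]$.

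\textbf{Main obstacle.} The only delicate step is matching the conjugation convention in the positivity computation, which decides whether one uses $c$ or $\overline{c}$. This is resolved by the fact that the Bessel inequality holds for all coefficient vectors and is invariant under conjugating them.
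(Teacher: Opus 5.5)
Your proposal is correct and is essentially the paper's argument: the Bessel condition gives $0\le H_N\le I_N$ for every Gram section (as observed in Subsection~\ref{Olevskii extension}), and Theorem~\ref{thm:countable_gram_realization} then yields $f_j=\pi_j\circ T$ with $T$ Borel, which is measurable and satisfies $|f_j(t)|\le 1$ for every $t$, so your null-set modification is only a harmless backup. One trivial slip: with $h_{jk}=\langle x_j,x_k\rangle$ one has $c^*H_Nc=\sum_{j,k}\overline{c_j}c_k\langle x_j,x_k\rangle=\bigl\|\sum_j\overline{c_j}x_j\bigr\|^2$, so the indices in your middle expression are swapped, but this does not affect the conclusion.
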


The formalization of the latter theorem uses Mathlib \cite{mathlib2020} for the standard notions from measure theory and functional analysis. The purpose of this section
is to make the correspondence between the Lean formalization and the mathematical statement precise and transparent without assuming familiarity with Lean.

\subsection{The underlying measure space}

We start by representing the unit interval as a
subset of $\mathbb{R}$. To do so, we make a definition in Lean which we call \lean{unitInterval}. In the source code, all declarations of this appendix are placed inside a dedicated namespace; for readability, the namespace prefix is omitted in what follows. The definition reads as follows.

\begin{leancode}
def unitInterval : Set ℝ :=
  Set.Icc (0 : ℝ) 1
\end{leancode}

Here \lean{Set.Icc} denotes a closed interval with endpoints $0$ and $1$. The annotation \lean{(0 : ℝ)} tells Lean that the endpoints are real numbers. We equip this interval with the Lebesgue measure. In Mathlib, \lean{volume} is Lebesgue measure on $\mathbb{R}$. To obtain the Lebesgue measure on the unit interval, we restrict it using \lean{volume.restrict unitInterval} and abbreviate the resulting measure by \lean{μ}. The corresponding Lean code reads as follows.

\begin{leancode}
abbrev μ : Measure ℝ :=
  volume.restrict unitInterval
\end{leancode}

\subsection{The Bessel condition}

Mathlib does not contain the definition of a Bessel sequence. Hence, we define it manually as follows.

\begin{leancode}
def BesselSequenceBoundOne
    {H : Type*} [NormedAddCommGroup H] [InnerProductSpace ℂ H]
    (x : ℕ → H) : Prop :=
    ∀ (F : Finset ℕ) (c : ℕ → ℂ),
      ‖∑ j ∈ F, c j • x j‖ ^ 2 ≤ ∑ j ∈ F, ‖c j‖ ^ 2
\end{leancode}

The term \lean{F : Finset ℕ} represents a finite subset $F\subset\mathbb{N}$ and the symbol \lean{•} denotes scalar multiplication. Consequently, \lean{BesselSequenceBoundOne x} states
that
$$
    \left\|\sum_{j\in F}c_jx_j\right\|^2
    \leq
    \sum_{j\in F}|c_j|^2
$$
for every finite $F\subset\mathbb{N}$ and every choice of coefficients $c_j \in \C$. The coefficients are represented in Lean as functions from $\N$ to $\C$ via \lean{c : ℕ → ℂ}. We also note that Lean indexes $\mathbb{N}$ by $0,1,2,\ldots$.

\subsection{The formal statement}

With these definitions in place, the assertion of
Theorem~\ref{thm:main1} with $K=1$ can be stated in Lean as follows.

\begin{leancode}
theorem main
    (H : Type*) [NormedAddCommGroup H] [InnerProductSpace ℂ H] [CompleteSpace H]
    (x : ℕ → H) (hx : BesselSequenceBoundOne x) :
    ∃ f : ℕ → ℝ → ℂ,
      (∀ j, AEStronglyMeasurable (f j) μ) ∧
      (∀ j t, t ∈ unitInterval → ‖f j t‖ ≤ 1) ∧
      ∀ j k, ⟪x k, x j⟫_ℂ = ∫ t in unitInterval, f j t * star (f k t)
\end{leancode}

The theorem starts with an abstract type \lean{H}. We have to give this type the structure of a Hilbert space. In Lean this can be done by imposing the three hypotheses \lean{NormedAddCommGroup H},
\lean{InnerProductSpace ℂ H}, and \lean{CompleteSpace H} which say that $H$ is a complex Hilbert space. Moreover, the theorem takes as an input a sequence $\{x_j \}$ of elements in the Hilbert space. The hypothesis \lean{(hx : BesselSequenceBoundOne x)} uses the previously defined notion of a Bessel sequence and asserts that $\{x_j \}$ has Bessel bound $1$.

The conclusion of the theorem says that there is a family of functions $f_j:\mathbb{R}\to\mathbb{C}$ having three properties. The first one is \lean{(∀ j, AEStronglyMeasurable (f j) μ)} which means that every
$f_j$ is almost everywhere strongly measurable with respect to
$\mu$, i.e., $f_j$ agrees $\mu$-almost everywhere with a measurable function. The second one is \lean{(∀ j t, t ∈ unitInterval → ‖f j t‖ ≤ 1)} and gives the pointwise estimate
$
    |f_j(t)|\leq 1
$
for $t\in[0,1]$. The third one is given by
\begin{center}
    \lean{∀ j k, ⟪x k, x j⟫_ℂ = ∫ t in unitInterval, f j t * star (f k t)}
\end{center}
and this corresponds to
\begin{equation}\label{eq:conclusion}
    \langle x_j,x_k\rangle
    =
    \int_0^1 f_j(t)\overline{f_k(t)}\,dt,
    \qquad j,k\in\mathbb{N}.
\end{equation}
Mathlib takes complex inner products to be conjugate-linear in the first argument and linear in the second. With the convention used in this paper, which is linear in the first argument, the expression \lean{⟪x k, x j⟫_ℂ} therefore represents $\langle x_j,x_k\rangle$. Moreover, \lean{star (f k t)} is the complex conjugate of $f_k(t)$. Thus, the final property is exactly \eqref{eq:conclusion}.
Accordingly, \lean{main} formalizes the existence assertion for
countable Bessel sequences with Bessel bound $1$ and with
$K=1$. The separate assertion that $1$ is the optimal universal
constant is not part of the formalization, since it amounts to the one-line argument given after Theorem \ref{thm:main1}.

\subsection{Source code}

The complete Lean formalization is available at
\begin{center}   \url{https://github.com/lukasliehr/GrothendieckBessel}.
\end{center}
The file \lean{Showcase.lean} contains all definitions and theorem
statements presented above, with two occurrences of \lean{sorry}
replacing the corresponding proofs. The companion file
\lean{Showcase_WithProofs.lean} contains the same declarations, with
both placeholders replaced by complete, machine-checked proofs.

\section*{Acknowledgments}

L.L.~is grateful to the Azrieli Foundation for the award of an Azrieli Fellowship and acknowledges the support of this research by ISF Grant No.~854/25.

\bibliographystyle{plain}
\bibliography{bibfile}

\begin{thebibliography}{10}

\bibitem{alonEtAl2006quadratic}
Noga Alon, Konstantin Makarychev, Yury Makarychev, and Assaf Naor.
\newblock Quadratic forms on graphs.
\newblock {\em Invent. Math.}, 163(3):499--522, 2006.

\bibitem{armstrong2026formalization}
Scott Armstrong and Julia Kempe.
\newblock Formalization of {D}e {G}iorgi--{N}ash--{M}oser {T}heory in {L}ean.
\newblock {\em arXiv preprint arXiv:2604.05984}, 2026.

\bibitem{ballProdromou2009}
Keith~M. Ball and Maria Prodromou.
\newblock A sharp combinatorial version of {Vaaler's} theorem.
\newblock {\em Bull. Lond. Math. Soc.}, 41(5):853--858, 2009.

\bibitem{billingsley1971}
Patrick Billingsley.
\newblock {\em Weak convergence of measures: {A}pplications in probability}, volume No. 5 of {\em Conference Board of the Mathematical Sciences Regional Conference Series in Applied Mathematics}.
\newblock Society for Industrial and Applied Mathematics, Philadelphia, PA, 1971.

\bibitem{charikarWirth2004}
Moses Charikar and Anthony Wirth.
\newblock Maximizing quadratic programs: extending {Grothendieck's} inequality.
\newblock In {\em 45th Annual IEEE Symposium on Foundations of Computer Science (FOCS 2004)}, pages 54--60. IEEE Computer Society, 2004.

\bibitem{pont2026cantor}
Jaume de~Dios de~Dios~Pont, Lukas Liehr, and Mitchell~A Taylor.
\newblock Cantor measures with odd base do not admit {F}ourier frames.
\newblock {\em arXiv preprint arXiv:2607.08656}, 2026.

\bibitem{Moura2021Lean4}
Leonardo de~Moura and Sebastian Ullrich.
\newblock The {Lean} 4 theorem prover and programming language.
\newblock In {\em Automated Deduction -- CADE 28}, volume 12699 of {\em Lecture Notes in Computer Science}, pages 625--635, Cham, 2021. Springer.

\bibitem{grothendieck1953resume}
Alexander Grothendieck.
\newblock R\'esum\'e de la th\'eorie m\'etrique des produits tensoriels topologiques.
\newblock {\em Bol. Soc. Mat. S\~ao Paulo}, 8:1--79, 1953.

\bibitem{hariharan2026milestone}
Sidharth Hariharan, Christopher Birkbeck, Seewoo Lee, Ho~Kiu~Gareth Ma, Bhavik Mehta, Auguste Poiroux, and Maryna Viazovska.
\newblock A {M}ilestone in {F}ormalization: {T}he {S}phere {P}acking {P}roblem in {D}imension 8.
\newblock {\em arXiv preprint arXiv:2604.23468}, 2026.

\bibitem{ilin2026semi}
Vasily Ilin.
\newblock Semi-autonomous formalization of the {V}lasov-{M}axwell-{L}andau equilibrium.
\newblock {\em arXiv preprint arXiv:2603.15929}, 2026.

\bibitem{KS35}
Stefan Kaczmarz and Hugo Steinhaus.
\newblock {\em Theorie der Orthogonalreihen}, volume~VI of {\em Monografje Matematyczne}.
\newblock Seminarjum Matematyczne Uniwersytetu Warszawskiego, Warszawa--Lw\'ow, 1936.

\bibitem{kashin2022observation}
B.~S. Kashin.
\newblock An observation on the {Gram} matrices of systems of uniformly bounded functions and a problem of {Olevskii}.
\newblock {\em Russian Math. Surveys}, 77(1):171--173, 2022.
\newblock Translated from Uspekhi Mat. Nauk 77 (2022), no.~1, 183--184.

\bibitem{kashinSzarek2003gram}
B.~S. Kashin and S.~J. Szarek.
\newblock On the {Gram} matrices of systems of uniformly bounded functions.
\newblock {\em Proc. Steklov Inst. Math.}, 243:227--233, 2003.
\newblock Translated from Tr. Mat. Inst. Steklova 243 (2003), 237--243.

\bibitem{kashinSzarek2003knaster}
Boris~S. Kashin and Stanislaw~J. Szarek.
\newblock The {Knaster} problem and the geometry of high-dimensional cubes.
\newblock {\em C. R. Math. Acad. Sci. Paris}, 336(11):931--936, 2003.

\bibitem{megretski2001}
Alexandre Megretski.
\newblock Relaxations of quadratic programs in operator theory and system analysis.
\newblock In {\em Systems, Approximation, Singular Integral Operators, and Related Topics (Bordeaux, 2000)}, volume 129 of {\em Oper. Theory Adv. Appl.}, pages 365--392. Birkh\"auser, Basel, 2001.

\bibitem{menchoff1938}
D.~Menchoff.
\newblock Sur les s\'eries de fonctions orthogonales born\'ees dans leur ensemble.
\newblock {\em Rec. Math. [Mat. Sbornik] N.S.}, 3(45)(1):103--120, 1938.

\bibitem{miller2026formalization}
Joseph~K. Miller.
\newblock A {F}ormalization of the {M}ean-{F}ield {D}erivation of the {V}lasov {E}quation: {AI-Assisted Lean Formalization as a Strategy Game}.
\newblock {\em arXiv preprint arXiv:2607.08986}, 2026.

\bibitem{nemirovskiRoosTerlaky1999}
Arkadi Nemirovski, Cornelis Roos, and Tam\'as Terlaky.
\newblock On maximization of quadratic form over intersection of ellipsoids with common center.
\newblock {\em Math. Program.}, 86(3):463--473, 1999.

\bibitem{olevskii1969extension}
A.~M. Olevskii.
\newblock On the extension of a sequence of functions to a complete orthonormal system.
\newblock {\em Math. Notes Acad. Sci. USSR}, 6(6):908--913, 1969.
\newblock Translated from Mat. Zametki 6 (1969), no.~6, 737--747.

\bibitem{Olevskii}
A.~M. Olevskii.
\newblock {\em Fourier Series with Respect to General Orthogonal Systems}, volume~86 of {\em Ergebnisse der Mathematik und ihrer Grenzgebiete}.
\newblock Springer-Verlag, New York--Heidelberg, 1975.

\bibitem{pisier2012grothendieck}
Gilles Pisier.
\newblock Grothendieck's theorem, past and present.
\newblock {\em Bull. Amer. Math. Soc. (N.S.)}, 49(2):237--323, 2012.

\bibitem{rockfellar1970}
R.~Tyrrell Rockafellar.
\newblock {\em Convex analysis}, volume No. 28 of {\em Princeton Mathematical Series}.
\newblock Princeton University Press, Princeton, NJ, 1970.

\bibitem{mathlib2020}
{The mathlib Community}.
\newblock The {Lean} mathematical library.
\newblock In {\em Proceedings of the 9th ACM SIGPLAN International Conference on Certified Programs and Proofs (CPP 2020)}, pages 367--381, New York, NY, 2020. ACM.

\end{thebibliography}

\end{document}